\newtheorem{theorem}{Theorem}[section]
\newtheorem{lemma}[theorem]{Lemma}
\newtheorem{corollary}[theorem]{Corollary}
\newtheorem{question}[theorem]{Question}
\newtheorem{example}[theorem]{Example}
\theoremstyle{definition}
\newtheorem{definition}[theorem]{Definition}
\newtheorem{proposition}[theorem]{Proposition}
\theoremstyle{remark}
\newcommand{\Aut}{\operatorname{Aut}}
\newcommand{\gyr}{\operatorname{gyr}}
\begin{document}
\title[Suitable sets for strongly topological gyrogroups]
{Suitable sets for strongly topological gyrogroups}

\author{Fucai Lin*}
\address{Fucai Lin: School of mathematics and statistics,
Minnan Normal University, Zhangzhou 363000, P. R. China}
\email{linfucai2008@aliyun.com; linfucai@mnnu.edu.cn}

\author{Tingting Shi}
\address{Tingting Shi: School of mathematics and statistics,
Minnan Normal University, Zhangzhou 363000, P. R. China}
\email{277653220@qq.com}

\author{Meng Bao}
\address{Meng Bao: School of mathematics and statistics,
Minnan Normal University, Zhangzhou 363000, P. R. China}
\email{mengbao95213@163.com}

\thanks{The first author is supported by the NSFC (No.11571158), the Natural Science Foundation of
Fujian Province (No. 2017J01405) of China, the Program for New Century Excellent Talents in Fujian
Province University, the Institute of Meteorological Big Data-Digital Fujian and Fujian Key Laboratory
of Data Science and Statistics. The second author is supported by the Young and middle-aged project
in Fujian Province (No. JAT190397).}

\thanks{* Corresponding author}

\keywords{suitable sets; gyrogroups; (strongly) topological gyrogroups; left precompact; pseudocompact; separable space; metrizable space.}
\subjclass[2020]{Primary 54H11; 22A05, Secondary 20B30; 20N99; 54A25; 54C35; 54D65; 54E35.}

\begin{abstract}
A discrete subset $S$ of a topological gyrogroup $G$ with the identity $0$ is said to be a
{\it suitable set} for $G$ if it generates a dense subgyrogroup of $G$ and $S\cup \{0\}$
is closed in $G$. In this paper, it was proved that each countable Hausdorff topological
gyrogroup has a suitable set; moreover, it is shown that each separable metrizable strongly topological
gyrogroup has a suitable set.
\end{abstract}

\maketitle
\section{Introduction}
In 1990, K.H. Hofmann and S.A. Morris in \cite{HM} introduced
the concept of a suitable set for a
topological group as an example of a 'thin' closed generating set. It was shown that
each locally compact group has a suitable set.
Fundamental results on suitable sets for topological groups
were obtained by Comfort et al. in~\cite{CMRS1998}
and Dikranjan et al. in~\cite{DTT1999} and ~\cite{DTT2000}.
I. Guran in~\cite{G2003}, F. Lin, A. Ravsky, and T. Shi in~\cite{LR}
considered suitable sets for paratopological groups.
In 2003, T. Banakh and I. Protasov generalized Guran's results to left topological groups in~\cite{BP2003}.

A generalization of a group, gyrogroup (see Definition~\ref{def:gyrogroup} below)
was introduced by A.A. Ungar~\cite{UA2002} in
2002, while studying a $c$-ball $\mathbb{R}_{c}^{3}=
\{\mathbf{v}\in\mathbb{R}^{3}: \|\mathbf{v}\|<c\}$
of relativistically admissible velocities
endowed with Einstein velocity addition $\oplus_{E}$.
Recall that for vectors $\mathbf{u}, \mathbf{v}\in \mathbb{R}_{c}^{3}$
$$\mathbf{u}\oplus_{E}\mathbf{v}=\frac{1}{1+\frac{\langle \mathbf{u}, \mathbf{v}\rangle}{c^{2}}}
\left(\mathbf{u}+\frac{1}{\gamma_{\mathbf{u}}}\mathbf{v}+\frac{1}{c^{2}}\frac{\gamma_{\mathbf{u}}}{1+\gamma_{\mathbf{u}}}\langle\mathbf{u}, \mathbf{v}\rangle\mathbf{u}\right),$$
where $$\gamma_{\mathbf{u}}=\frac{1}{\sqrt{1-\frac{\parallel\mathbf{u}\parallel^{2}}{c^{2}}}}$$
is the Lorentz factor.
It turned out that $(\mathbb{R}_{c}^{3}, \oplus_{E})$ is a gyrogroup,
which fails to be a group, because the operation $\oplus_{E}$ is not associative.
Recently, the topic of gyrogroups was investigated by many scholars, see~\cite{FM,FM1,LF,LF1,LF2,LF3,SL,RS,ST,ST1,ST2,UA2002}.

In 2017, W. Atiponrat~\cite{AW} introduced the concept of topological gyrogroups, which is
a generalization of a topological group. Namely, a topological gyrogroup $G$ is a gyrogroup
$(G,\oplus)$ endowed with a topology such that
the multiplication map $\oplus$ from $G\times G$ to $G$ is jointly continuous
and the inverse map $\ominus:G\rightarrow G$ is continuous.
In turned out that topological gyrogroups possess nice properties.
In particular, Z. Cai, S. Lin, and W. He in \cite{CZ} proved that every topological gyrogroup is a
rectifiable space, so every first-countable topological gyrogroup is metrizable. Then R. Shen
in~\cite{RS} proved that every weakly first-countable paratopological left-loop is first-countable.
M. Bao and F. Lin introduced the concept of strongly topological gyrogroups, and proved that
every feathered strongly topological gyrogroup is paracompact, every $T_{0}$ strongly
topological gyrogroup is completely regular and every $T_{0}$ strongly topological gyrogroup with
a countable pseudocharacter is submetrizable, see \cite{BL,BL1,BL2}.

In this paper, we mainly consider suitable sets for (strongly) topological gyrogroups.
A subset $S$ of a topological gyrogroup $G$ is said to be a {\it suitable set} for
$G$ if (1) $\overline{\langle S\rangle}=G$, (2) $S$ has the discrete topology, and (3) $S\cup \{0\}$
is closed in $G$. We show that each countable Hausdorff topological
gyrogroup has a suitable set, and each separable metrizable strongly topological gyrogroup has a suitable set, which generalizes some results for topological groups in \cite{CMRS1998,DTT1999}.

All spaces throughout this paper are supposed to be Hausdorff, unless the opposite is not stated.
Let $\mathbb{N}$ be the set of all positive integers and $\omega$ the first infinite ordinal.
Let $X$ be a topological space, and let $A$ be a subset of $X$.
The {\it closure} of $A$ in $X$ is denoted by $\overline{A}$.
For undefined notation and terminology, the reader may refer to~\cite{AA, E}.

\section{Motivation and Preliminaries}
In this section, we provide a motivation to study suitable sets in topological gyrogroups.
Also we recall and introduce notions and notation used in the paper.

\begin{definition}\cite{AW} A {\it groupoid} is a pair $(G, \oplus)$,
where $G$ is a nonempty set and $\oplus$ is a binary operation on $G$.
A function $f$ from a groupoid $(G_{1}, \oplus_{1})$ to a groupoid $(G_{2}, \oplus_{2})$ is called a
{\it groupoid homomorphism}, if $f(x\oplus_{1}y)=f(x)\oplus_{2} f(y)$ for any elements $x, y\in G_{1}$.
Furthermore, a bijective groupoid homomorphism from a groupoid $(G, \oplus)$ to itself will be called a
{\it groupoid automorphism}. We denote for a set of all automorphisms
of a groupoid $(G, \oplus)$ by $\Aut(G, \oplus)$.
\end{definition}

\begin{definition}\cite{UA}\label{def:gyrogroup}
A groupoid $(G,\oplus)$ is called a {\it gyrogroup}, if its binary operation satisfies the following
conditions.

\smallskip
(G1) There exists a unique identity element $0\in G$ such that $0\oplus a=a=a\oplus0$ for all $a\in G$.

\smallskip
(G2) For each $x\in G$, there exists a unique inverse element $\ominus x\in G$ such that $\ominus x \oplus x=0=x\oplus (\ominus x)$.

\smallskip
(G3) There exists a map $\gyr:G\times S\to \Aut(G, \oplus)$,
such that $x\oplus (y\oplus z)=(x\oplus y)\oplus\gyr[x, y](z)$ for all $z\in G$.

\smallskip
(G4) For any $x, y\in G$, $\mbox{gyr}[x\oplus y, y]=\mbox{gyr}[x, y]$.
\end{definition}

\begin{definition}\cite{ST}
A nonempty subset $H$ of a gyrogroup $(G,\oplus)$ is called a {\it subgyrogroup} of $G$ (denoted
by $H\leq G$), provided the following conditions hold.

$(i)$ The restriction $\oplus| _{H\times H}$ is a binary operation on $H$, i.e. $(H, \oplus| _{H\times H})$ is a groupoid.

\smallskip
$(ii)$ For any $x, y\in H$, the restriction of $\gyr[x, y]$ to $H$, $\gyr[x, y]|_{H}$ : $H\rightarrow \gyr[x, y](H)$, is a bijective homomorphism.

\smallskip
$(iii)$ $ (H, \oplus|_{H\times H})$ is a gyrogroup.

\smallskip
A subgyrogroup $H$ of $G$ is said to be an {\it $L$-subgyrogroup} \cite{ST}, denoted
by $H\leq_{L} G$, if $\gyr[a, h](H)=H$ for all $a\in G$ and $h\in H$.
\end{definition}

\begin{definition}\cite{AW}
A triple $(G, \tau, \oplus)$ is called a {\it topological gyrogroup},
provided the following conditions hold.

\smallskip
(1) $(G, \tau)$ is a topological space.

\smallskip
(2) $(G, \oplus)$ is a gyrogroup.

\smallskip
(3) The binary operation $\oplus: G\times G\rightarrow G$ is jointly continuous, where $G\times G$
is endowed with the product topology and the inversion
$\ominus: G\rightarrow G$, $x\mapsto \ominus x$, is continuous.
\end{definition}

It is easy to see that each topological group is a topological gyrogroup $(G,\tau,\oplus)$
provided we put $\gyr[x,y](z)=z$ all $x,y,z \in G$.
A well-known example of a topological gyrogroup, which is not a topological group,
is the following {\it M\"{o}bius topological gyrogroup}.

\begin{example}\cite{AW}\label{lz1}
Let $D$ be a open unit disk $\{z\in\mathbb C:|z|<1\}$ in the complex plane.
Define a M\"{o}bius addition $\oplus _{M}: D\times D\rightarrow D$ putting
$$a\oplus _{M}b=\frac{a+b}{1+\bar{a}b}\ \mbox{for all}\ a, b\in D.$$
Then $(D, \oplus _{M})$ is a gyrogroup with
$$\gyr[a, b](c)=\frac{1+a\bar{b}}{1+\bar{a}b}c\ \mbox{for any}\ a, b, c\in D.$$
But $(D, \oplus _{M})$ is not a group, because the operation $\oplus _{M}$ is not associative.
Indeed, it is easy to check that $(1/2 \oplus _{M} i/2) \oplus _{M} (-1/2)\ne 1/2 \oplus _{M} (i/2 \oplus _{M} (-1/2))$.
If $\tau$ the usual topology on $D$ then $(D,\tau,\oplus _{M})$ is a topological gyrogroup.
\end{example}

\begin{definition}\cite{BL}
A topological gyrogroup $G$ is a {\it strongly topological gyrogroup} if
there exists a neighborhood base $\mathscr U$ of $0$ such that
$\gyr[x, y](U)=U$ for each $x, y\in G$ and $U\in \mathscr U$.
In this case we shall say that $G$ is a strongly topological gyrogroup with a neighborhood base
$\mathscr U$ of $0$. Clearly, we may assume that $U$ is symmetric for each $U\in\mathscr U$.
\end{definition}

We claim that $(D,\tau,\oplus _{M})$ in Example \ref{lz1} is a strongly topological gyrogroup
\cite{BL}. Indeed, for any $n\in\omega$, let $U_{n}=\{x\in D: |x|\leq \frac{1}{n}\}$.
Then, $\mathscr U=\{U_{n}: n\in \omega\}$ is a neighborhood base of $0$. Moreover,
since $\overline{1+a\bar{b}}=1+\bar{a}b$ for each $a,b\in D$, we have
$|\frac{1+a\bar{b}}{1+\bar{a}b}|=1$. Therefore, we see that
$\gyr[x, y](U)\subset U$, for any $x, y\in D$ and each $U\in \mathscr U$.
By \cite[Proposition 2.6]{ST} it follows that $\gyr[x, y](U)=U$.

Moreover, M\"{o}bius gyrogroups, Einstein gyrogroups, and Proper velocity gyrogroups,
that were studied in \cite{FM, FM1,UA}, are all strongly topological gyrogroups, see \cite{BL}.

\begin{definition}\cite{HM}
Let $G$ be a topological gyrogroup and $S$ a subset of $G$. Then $S$ is said to be a {\it suitable set}
 for $G$ if $S$ is discrete in itself, generates a dense subgyrogroup of $G$, and $S\cup \{0\}$ is closed in $G$.
\end{definition}

\smallskip
By the same notations of \cite{DTT1999}, let $\mathcal{S}$ (resp., $\mathcal{S}_{c}$) be the class of topological gyrogroups
having a suitable (resp., closed suitable) set. It turns out that very often the subset $S$ of the group $G$ has the stronger
property to generate $G$, instead of generating just a dense subgroup of $G$. We denote by
$\mathcal{S}_{g}$ and $\mathcal{S}_{cg}$ the corresponding subclasses of $\mathcal{S}$ and $\mathcal{S}_{c}$, respectively.

The following proposition generalizes \cite[Proposition 1.4]{CMRS1998}.

\begin{proposition}
If a topological gyrogroup $(G,\oplus)$ has a suitable set, then $G$ is Hausdorff or $|G|\leq 2$.
\end{proposition}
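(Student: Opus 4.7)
The plan is to assume $G$ is not Hausdorff and deduce $|G|\leq 2$. I would first record three preliminary facts about any topological gyrogroup $G$, writing $N := \overline{\{0\}}$. (i) Each left translation $L_g\colon x\mapsto g\oplus x$ is a homeomorphism of $G$ (topological gyrogroups are rectifiable spaces by \cite{CZ}), hence $\overline{\{g\}} = g\oplus N$ for every $g\in G$. (ii) Joint continuity of $\oplus$ together with continuity of $\ominus$ imply $N\oplus N\subseteq N$ and $\ominus N = N$, so $N$ is a closed subgyrogroup of $G$. (iii) As in the topological group case, $G$ is Hausdorff if and only if $N = \{0\}$.

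Assuming $N\supsetneq\{0\}$, I first dispose of the case $S\subseteq\{0\}$: then $\overline{\langle S\rangle} = \overline{\{0\}} = N$ must equal $G$, but $S\cup\{0\} = \{0\}$ being closed forces $N = \{0\}$, a contradiction. Hence there is some $s\in S\setminus\{0\}$. The inclusion $\overline{\{s\}} = s\oplus N\subseteq \overline{S\cup\{0\}} = S\cup\{0\}$ shows that for each $n\in N\setminus\{0\}$, the point $s\oplus n$ lies in $S\cup\{0\}$ and is distinct from $s$. Moreover $s\oplus n$ and $s$ are topologically indistinguishable: the inclusion $s\oplus n\in\overline{\{s\}}$ is immediate, while via the gyrogroup identity one checks $s = (s\oplus n)\oplus\gyr[s,n](\ominus n)$, with $\gyr[s,n](\ominus n)\in N$ because gyrations are continuous automorphisms fixing $0$. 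Since $S$ is discrete, $s\oplus n$ cannot belong to $S$, so $s\oplus n = 0$, i.e.\ $n = \ominus s$.

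Thus $N = \{0,\ominus s\}$. The subgyrogroup relation $\ominus s\oplus\ominus s\in N$ rules out $\ominus s\oplus\ominus s = \ominus s$ (which would give $s=0$), so $\ominus s\oplus\ominus s = 0$, whence $s = \ominus s$ and $N = \{0,s\}$. Applying the same analysis to any other element of $S\setminus\{0\}$ identifies it with $s$, so $S\subseteq\{0,s\} = N$; therefore $\langle S\rangle\subseteq N$ and $G = \overline{\langle S\rangle}\subseteq N = \{0,s\}$, yielding $|G|\leq 2$.

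The main obstacle is fact (iii). The easy observation that $N=\{0\}$ is equivalent to $T_1$ is not enough: upgrading this to Hausdorffness requires either citing the known result that $T_0$ implies $T_2$ for topological gyrogroups, or reproducing its short proof via joint continuity of $\oplus$ at $(0,0)$, in analogy with the topological group setting. Once this is in place, the remaining steps are a faithful adaptation of the proof of \cite[Proposition~1.4]{CMRS1998}.
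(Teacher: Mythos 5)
Your proof is correct and follows essentially the same route as the paper's: both reduce to showing that the closures $\overline{\{0\}}$ and $\overline{\{s\}}$ for $s\in S\setminus\{0\}$ coincide with the two-element set $\{0,s\}$, and then conclude that $G=\overline{\langle S\rangle}$ has at most two points, exactly as in the adaptation of \cite[Proposition~1.4]{CMRS1998}; the appeal to the $T_0\Rightarrow T_2$ result of \cite{AW} that you flag as the ``main obstacle'' is precisely the citation the paper uses. If anything, your version is the more carefully written one: it handles the degenerate case $S\subseteq\{0\}$ explicitly and reaches the conclusion $G\subseteq N=\{0,s\}$ cleanly, whereas the paper's final lines (``$s=\ominus s=0$, that is, $G=S$'') garble what should simply be $\langle S\rangle=\{0,s\}$ and hence $|G|\leq 2$.
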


\begin{proof}
Assume that $G$ is not Hausdorff and $|G|\geq 3$. Let $S$ be the suitable set for $G$. Since $G$ is not Hausdorff and $T_{0}$ and $T_{3}$ are equivalent in
topological gyrogroups by \cite{AW}, for every $g\in G$ a set $\overline{\{g\}}$
contains a point $h\not =g$.
By the assumption of $|G|\geq 3$, it follows that $S\cup\{0\}$ has at least two points. Take an arbitrary point $s\in S\setminus\{0\}$. Since $S$ is discrete in itself, we have $S\cap \overline{\{s\}}=\{s\}$. Further, $S\cup \{0\}$ is closed in $G$, thus $\overline{\{s\}}=\{s, 0\}$. Therefore, $\overline{\{0\}}=\{s, 0\}$. It follows that $S$ has at most two points, $s$ and $0$. Moreover, since $\overline{\{0\}}$ is a gyrogroup, it is clear that $s\oplus s=0$. Then, $s=\ominus s=0$, that is, $G=S$, this is a contradiction.
\end{proof}

Recall that given a space $X$, a {\it pseudocharacter $\psi(X)$} of $x$ is the smallest
infinite cardinal $\kappa$ such that any point of $X$ is an intersection of at most $\kappa$
open subsets of $X$ and {\it extent $e(X)$}
is the supremum of cardinalities of closed discrete subspaces of $X$.
Similarly to the proof of~\cite[Lemma 2.3]{DTT1999}, we can show the following

\begin{proposition}
A Hausdorff topological gyrogroup $G$ which has a suitable set satisfies $d(G)\leq e(G)\cdot \psi (G)$.
\end{proposition}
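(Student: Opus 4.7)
The plan is to mirror the standard topological-group argument in \cite[Lemma 2.3]{DTT1999}: first bound the cardinality of a suitable set $S$ by $e(G)\cdot\psi(G)$, then bound the density $d(G)$ by counting elements of the subgyrogroup $\langle S\rangle$ generated by $S$.

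For the first step, fix a suitable set $S$ for $G$, set $\kappa=\psi(G)$, and choose open neighborhoods $\{U_\alpha:\alpha<\kappa\}$ of $0$ with $\bigcap_{\alpha<\kappa}U_\alpha=\{0\}$; this is possible because $G$ is Hausdorff. For each $\alpha$, I claim that $S\setminus U_\alpha$ is closed and discrete in $G$. It is closed because its closure lies in the closed set $S\cup\{0\}$, yet neither $0$ (which sits inside the open set $U_\alpha$) nor any $s'\in S\cap U_\alpha$ (use the open neighborhood of $s'$ witnessing the discreteness of $S$, intersected with $U_\alpha$) can be a limit point of $S\setminus U_\alpha$; it is discrete because $S$ is. Therefore $|S\setminus U_\alpha|\leq e(G)$ for every $\alpha$. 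Since every $s\in S\setminus\{0\}$ lies outside some $U_\alpha$, I get $S\setminus\{0\}=\bigcup_{\alpha<\kappa}(S\setminus U_\alpha)$, whence $|S|\leq e(G)\cdot\psi(G)$.

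For the second step, $\overline{\langle S\rangle}=G$ gives $d(G)\leq|\langle S\rangle|$. The subgyrogroup $\langle S\rangle$ consists of finite bracketed terms on elements of $S$ combined via $\oplus$ and $\ominus$; by axiom (G3), any gyration can be rewritten as $\gyr[x,y](z)=\ominus(x\oplus y)\oplus(x\oplus(y\oplus z))$, so requiring closure under gyrations introduces no new elements. Counting finite bracketed words then yields $|\langle S\rangle|\leq |S|\cdot\omega$, and combining with the first step I conclude $d(G)\leq|S|\cdot\omega\leq e(G)\cdot\psi(G)$, using that $\psi(G)\geq\omega$ by definition. The only genuinely gyrogroup-specific point to check is that gyrations can be absorbed into $\oplus,\ominus$ expressions so as not to inflate the cardinality of $\langle S\rangle$; this is immediate from (G3), so no real obstacle remains beyond the verifications above.
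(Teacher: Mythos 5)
Your proposal is correct and follows essentially the same argument as the paper: bound $|S|$ by $e(G)\cdot\psi(G)$ using the sets $S\setminus U_\alpha$ for a pseudocharacter-witnessing family, then use $d(G)\le|\langle S\rangle|\le|S|\cdot\aleph_0$. You merely supply more detail than the paper does at the two points it leaves implicit (why $S\setminus U_\alpha$ is closed discrete, and why gyrations do not inflate $|\langle S\rangle|$), which is fine.
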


\begin{proof}
We assume that $A$ is a suitable set for $G$. If $U$ is an open neighborhood of $0$ in $G$, then $A\setminus U$ is discrete and closed in $G$, which implies  $|A\setminus U|\leq e(G)$. Pick a family $\gamma$ of open sets in $G$ such that $\bigcap \gamma =\{0\}$ and $|\gamma |=\psi (G)$. Since $A\setminus \{0\}\subset \bigcup \{A\setminus U: U\in \gamma\}$, it follows that $|A|\leq e(G)\cdot \psi (G)$. The subgyrogroup $H=\langle A\rangle$ of $G$ satisfies $|H|\leq |A|\cdot \aleph _{0}$. Since $A$ is a suitable set and $H$ is dense in $G$, we can conclude that $$d(G)\leq |H|\leq |A|\cdot \aleph _{0}\leq e(G)\cdot \psi (G).$$
\end{proof}

Therefore, it is natural to have the following result.

\begin{corollary}\label{c2}
A non-separable Lindel\"{o}f Hausdorff topological gyrogroup of countable pseudocharacter does not have a suitable set.
\end{corollary}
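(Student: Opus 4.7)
The plan is to derive Corollary \ref{c2} as an immediate consequence of the preceding proposition $d(G)\leq e(G)\cdot \psi(G)$, once both cardinal factors on the right-hand side are shown to be countable under the stated hypotheses.

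First, I would argue by contradiction: suppose that a Hausdorff topological gyrogroup $G$ is Lindel\"of, has countable pseudocharacter, is non-separable, and nonetheless admits a suitable set $A$. The previous proposition then applies and yields $d(G)\leq e(G)\cdot \psi(G)$, so it remains to verify that $e(G)\leq \aleph_0$ and $\psi(G)\leq \aleph_0$. The second inequality is immediate from the hypothesis of countable pseudocharacter.

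For the extent bound, I would invoke the standard fact that in any Lindel\"of space every closed discrete subspace is countable. The verification is quick: if $D$ is a closed discrete subspace of the Lindel\"of space $G$, pick for each $d\in D$ an open set $V_d\subseteq G$ with $V_d\cap D=\{d\}$, and use that $\{V_d:d\in D\}\cup\{G\setminus D\}$ is an open cover of $G$; Lindel\"ofness forces a countable subcover, which in turn forces $D$ to be countable. Hence $e(G)\leq \aleph_0$.

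Combining the two bounds gives $d(G)\leq \aleph_0\cdot \aleph_0=\aleph_0$, so $G$ is separable, contradicting the non-separability assumption. I do not anticipate a genuine obstacle here; the only subtle point is making sure the Lindel\"of hypothesis is applied to obtain the extent bound rather than, say, a compactness-type argument, but this is a textbook observation and requires no gyrogroup-specific input beyond the previous proposition.
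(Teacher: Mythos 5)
Your argument is correct and is precisely the intended deduction: the paper offers no separate proof for this corollary, presenting it as an immediate consequence of the preceding proposition $d(G)\leq e(G)\cdot\psi(G)$, with Lindel\"ofness giving $e(G)\leq\aleph_0$ and the pseudocharacter hypothesis giving $\psi(G)\leq\aleph_0$. Your verification of the extent bound via the cover $\{V_d : d\in D\}\cup\{G\setminus D\}$ is the standard textbook fact and fills in exactly what the paper leaves implicit.
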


\begin{example}
There exists a non-separable Lindel\"{o}f Hausdorff topological gyrogroup $G$ of countable pseudocharacter such that $G$ does not have a suitable set and $G$ is not a topological group.
\end{example}

\begin{proof}
Let $D$ be the topological gyrogroup in Example~\ref{lz1}, and let $H$ be the Lindel\"{o}f non-separable topological group with countable pseudocharacter in (a) of \cite[Theorem 2.4]{DTT1999}. Then $D$ has a suitable set by in the following Corollary~\ref{c1} and $H$ does not have any suitable set. Moreover, the product $G=D\times H$ is a Lindel\"{o}f non-separable topological gyrogroup with countable pseudocharacter, hence it does not have any suitable set by Corollary~\ref{c2}. Clearly, $G$ is not a topological group.
\end{proof}

In this paper we mainly consider the following question.

\begin{question}
If $G$ belongs to some class $\mathcal{C}$ of Hausdorff topological gyrogroups, does $G$ have a suitable set?
\end{question}

\section{Countable topological gyrogroup with a suitable set}
In this section, we study the suitable sets in the class $\mathcal{C}$ of Hausdorff countable topological gyrogroups. We prove that every Hausdorff countable topological gyrogroup $G$ has a closed discrete subset $S$ such that $\langle S\rangle =G$. First, we need some lemmas.

Let $G$ be a gyrogroup. Fix an $n\in\mathbb{N}$. For any $x_{1},\cdots, x_{n}\in G$ and $\varepsilon_{1}, \cdots, \varepsilon_{n}\in\{-1, 1\}$, denote by $R[\varepsilon_{1}x_{1}, \cdots, \varepsilon_{n}x_{n}]$ the set of all elements which is added some brackets in the summand $\varepsilon_{1}x_{1}\oplus \cdots \oplus\varepsilon_{n}x_{n}$ such that the summand belongs to $G$, where
$$\varepsilon _{i}x_{i}=\left\{
\begin{matrix}
x_{i}, &\varepsilon _{i}=1;\\
\ominus x_{i}, &\varepsilon _{i}=-1.
\end{matrix}
\right.$$ Clearly, $R[\varepsilon_{1}x_{1}, \cdots, \varepsilon_{n}x_{n}]$ is a countable set, so enumerate $R[\varepsilon_{1}x_{1}, \cdots, \varepsilon_{n}x_{n}]$ as $$\{f_{m}(\varepsilon_{1}x_{1}, \cdots, \varepsilon_{n}x_{n}): m\in\mathbb{N}\}.$$If $A_{1}, \cdots, A_{n}\subset G$, then we denote $R[\varepsilon_{1}A_{1}, \cdots, \varepsilon_{n}A_{n}]$ and $f_{m}(\varepsilon_{1}A_{1}, \cdots, \varepsilon_{n}A_{n})$ as the sets $$\bigcup_{x_{1}\in A_{1}, \cdots, x_{n}\in A_{n}}R[\varepsilon_{1}x_{1}, \cdots, \varepsilon_{n}x_{n}]\ \mbox{and}\ \bigcup_{x_{1}\in A_{1}, \cdots, x_{n}\in A_{n}}f_{m}(\varepsilon_{1}x_{1}, \cdots, \varepsilon_{n}x_{n}),$$respectively.

In the class of topological gyrogroups, since the multiplication is jointly continuous and the inverse is continuous, it is easy to prove the following lemma.

\begin{lemma}\label{l11}
Let $a_{1}, a_{2}, \ldots, a_{n}$ be points of a topological gyrogroup $G$, and let $V$ be a neighborhood of the point $f_{m}(\varepsilon_{1}a_{1}, \cdots, \varepsilon_{n}a_{n})$. Then there exists neighborhoods $U_{1}, \ldots, U_{n}$ of $a_{1}, \ldots, a_{n}$ in $G$ respectively such that $f_{m}(\varepsilon_{1}U_{1}, \cdots, \varepsilon_{n}U_{n})\subset V$.
\end{lemma}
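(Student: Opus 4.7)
The plan is a straightforward induction on the number $n$ of operands, using the fact that every element of $R[\varepsilon_{1}a_{1},\ldots,\varepsilon_{n}a_{n}]$ is, by construction, the value of a finitely iterated composition of the two continuous operations $\oplus$ and $\ominus$ applied to the inputs. So $f_m$, once we fix the bracketing it encodes, is a continuous map $G^{n}\to G$, and the conclusion is just that preimages of open sets under a continuous map contain basic open boxes.

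For the base case $n=1$, the set $R[\varepsilon_{1}a_{1}]$ is either $\{a_{1}\}$ or $\{\ominus a_{1}\}$; continuity of the inversion $\ominus$ (which is part of the definition of a topological gyrogroup) yields the required neighborhood $U_{1}$ of $a_{1}$ with $\varepsilon_{1}U_{1}\subset V$.

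For the inductive step, fix a bracketing that defines $f_{m}(\varepsilon_{1}a_{1},\ldots,\varepsilon_{n}a_{n})$. Every such bracketing has an outermost $\oplus$, splitting the expression at some index $1\le k<n$ into
\[
f_{m}(\varepsilon_{1}a_{1},\ldots,\varepsilon_{n}a_{n})
= f_{m'}(\varepsilon_{1}a_{1},\ldots,\varepsilon_{k}a_{k})\oplus f_{m''}(\varepsilon_{k+1}a_{k+1},\ldots,\varepsilon_{n}a_{n}),
\]
where $f_{m'}$ and $f_{m''}$ are the elements corresponding to the two inner bracketings. Let $p$ and $q$ denote these two summands. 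By joint continuity of $\oplus\colon G\times G\to G$ at $(p,q)$, there exist open neighborhoods $W_{1}\ni p$ and $W_{2}\ni q$ with $W_{1}\oplus W_{2}\subset V$. Applying the inductive hypothesis to $f_{m'}$ with the neighborhood $W_{1}$ and to $f_{m''}$ with the neighborhood $W_{2}$ produces neighborhoods $U_{1},\ldots,U_{k}$ of $a_{1},\ldots,a_{k}$ and $U_{k+1},\ldots,U_{n}$ of $a_{k+1},\ldots,a_{n}$ such that $f_{m'}(\varepsilon_{1}U_{1},\ldots,\varepsilon_{k}U_{k})\subset W_{1}$ and $f_{m''}(\varepsilon_{k+1}U_{k+1},\ldots,\varepsilon_{n}U_{n})\subset W_{2}$; then $f_{m}(\varepsilon_{1}U_{1},\ldots,\varepsilon_{n}U_{n})\subset W_{1}\oplus W_{2}\subset V$, as required.

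There is no real obstacle here; the only mild subtlety is making explicit that the enumeration $\{f_{m}\}$ is indexing over all possible bracketings (together with the placement of $\ominus$ signs), so one must carry out the induction \emph{per bracketing} rather than uniformly over $m$. Because the lemma fixes a single index $m$, this causes no difficulty: the outermost $\oplus$ of the bracketing coded by $m$ determines the split $k$ and the two inner indices $m',m''$ used in the induction.
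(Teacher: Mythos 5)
Your proof is correct and is exactly the argument the paper has in mind: the paper omits the proof entirely, remarking only that the lemma is ``easy to prove'' from the joint continuity of $\oplus$ and the continuity of $\ominus$, and your induction on the outermost $\oplus$ of the fixed bracketing (with the $n=1$ base case handled by continuity of inversion) is the standard way to make that precise. Your closing remark that the index $m$ must be read as coding a fixed bracketing, uniformly over the input tuples, is a worthwhile clarification of the paper's notation rather than a deviation from it.
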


A topological space $X$ is {\it zero-dimensional} if it has a base consisting of clopen subsets.

\begin{lemma}\label{yl1}
Let $G$ be a nondiscrete Hausdorff topological gyrogroup and $U$ a nonempty open subset which generates $G$. Then every point $x\in U$ has an open neighborhood $V_{x}$ of $x$ such that $V_{x}\subset U$ and $\langle U\setminus \overline{V_{x}}\rangle =G$. In particular, if $G$ is zero-dimensional, then $V_{x}$ can be chosen to be clopen in $G$.
\end{lemma}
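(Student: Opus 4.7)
The plan is to adapt the classical proof of the analogous statement for topological groups (see \cite{DTT1999}) to the gyrogroup setting, exploiting that the left translation $L_a\colon v\mapsto a\oplus v$ is a self-homeomorphism of $G$ with continuous inverse $L_{\ominus a}$. This rests on the gyrogroup identity $\gyr[a,\ominus a]=\mathrm{id}$ (itself a consequence of (G1)--(G4) together with $\gyr[0,\cdot]=\mathrm{id}$), which yields the left cancellation law $a\oplus(\ominus a\oplus v)=v$. First I would invoke the fact that every $T_0$ topological gyrogroup is $T_3$ (see \cite{AW}) to find an open neighborhood $W$ of $x$ with $\overline{W}\subset U$. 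Because left translations are always homeomorphisms, $G$ is topologically homogeneous, so the nondiscreteness of $G$ forces every point to be non-isolated; in particular $U$ is infinite, and I pick an auxiliary element $a_0\in U$ with $a_0\neq x$.

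The next step is to express $x$ as a three-letter word with all letters in $U\setminus\{x\}$. Since $L_{\ominus a_0}$ is a homeomorphism sending $a_0$ to $0$, the set $N_0:=\ominus a_0\oplus(U\setminus\{x\})$ is an open neighborhood of $0$. The map $a\mapsto\ominus a\oplus x$ is continuous and takes the value $0\in N_0$ at $a=x$, so by nondiscreteness I may pick $a\in U\setminus\{x,a_0\}$ close enough to $x$ that $\ominus a\oplus x\in N_0$. Writing $\ominus a\oplus x=\ominus a_0\oplus u$ for the unique $u\in U\setminus\{x\}$ (uniqueness from the homeomorphism $L_{\ominus a_0}$, with $u=a_0\oplus(\ominus a\oplus x)$; note $u\neq x$ because $a\neq a_0$), left cancellation gives
\[
x=a\oplus(\ominus a\oplus x)=a\oplus(\ominus a_0\oplus u),
\]
a length-three word in letters $a,a_0,u\in U\setminus\{x\}$.

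Now I would upgrade this single-point identity to a representation valid on a whole neighborhood of $x$. Set $\phi:=L_a\circ L_{\ominus a_0}\colon G\to G$, a self-homeomorphism with $\phi(u)=x$. By Hausdorffness I pick an open $V_0\subset W$ with $x\in V_0$ and $\overline{V_0}\cap\{a,a_0,u\}=\emptyset$; then $u$ lies in the open set $U\setminus\overline{V_0}$, so I fix an open neighborhood $N(u)\subset U\setminus\overline{V_0}$ of $u$. The image $\phi(N(u))$ is an open neighborhood of $x$, and using regularity again I choose an open $V_x$ with $x\in V_x$ and $\overline{V_x}\subset V_0\cap\phi(N(u))$; in particular $V_x\subset U$. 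For every $y\in\overline{V_x}$, the point $\phi^{-1}(y)$ lies in $N(u)\subset U\setminus\overline{V_x}$, and
\[
y=\phi(\phi^{-1}(y))=a\oplus\bigl(\ominus a_0\oplus\phi^{-1}(y)\bigr)\in\langle U\setminus\overline{V_x}\rangle,
\]
since $a,a_0,\phi^{-1}(y)$ all belong to $U\setminus\overline{V_x}$. Hence $\overline{V_x}\subset\langle U\setminus\overline{V_x}\rangle$, which forces $U\subset\langle U\setminus\overline{V_x}\rangle$ and $\langle U\setminus\overline{V_x}\rangle\supset\langle U\rangle=G$. When $G$ is zero-dimensional, a clopen base allows me to pick $V_x$ clopen inside $V_0\cap\phi(N(u))$, so that $\overline{V_x}=V_x$.

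The hard part will be verifying that, despite the failure of associativity, the perturbation map $\phi$ really is a self-homeomorphism of $G$: this reduces to the gyrogroup identity $\gyr[a,\ominus a]=\mathrm{id}$ for all $a$, a purely gyrogroup-algebraic fact. Once this is in hand, the classical group-theoretic argument passes through the non-associative setting essentially unchanged; should one prefer a longer bracketed word representation of $x$, the required continuity of general word maps is exactly what Lemma \ref{l11} supplies.
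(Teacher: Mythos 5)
Your proof is correct, but it takes a genuinely different route from the paper's. The paper first observes that $U\setminus\{x\}$ is dense in $U$ (by nondiscreteness), so $\langle U\setminus\{x\}\rangle$ is a dense open subgyrogroup, hence closed by \cite[Proposition 7]{AW} and therefore equal to $G$; it then writes $x=f_{m}(\varepsilon_{1}y_{1},\cdots,\varepsilon_{n}y_{n})$ as a bracketed word of \emph{a priori arbitrary} length and shape in letters from $U\setminus\{x\}$, and invokes Lemma~\ref{l11} to push this representation onto a set $W=f_{m}(\varepsilon_{1}O_{1},\cdots,\varepsilon_{n}O_{n})$ around $x$. You instead manufacture an explicit three-letter word $x=a\oplus(\ominus a_{0}\oplus u)$ by hand, using only left translations, and then transport it over a neighborhood via the self-homeomorphism $\phi=L_{a}\circ L_{\ominus a_{0}}$. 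Your route costs a small amount of gyro-algebra ($\gyr[a,\ominus a]=\mathrm{id}$ and the two cancellation laws, all standard; note also that $u\in U\setminus\{x\}$ is already forced by $\ominus a\oplus x\in N_{0}=L_{\ominus a_{0}}(U\setminus\{x\})$, so your parenthetical appeal to $a\neq a_{0}$ is not even needed), but it buys several things: it avoids Lemma~\ref{l11} and the fact that open subgyrogroups are closed; it controls the length of the generating word; and, since $\phi$ is a composition of left translations only, it sidesteps the step in the paper's proof where $W$ is asserted to be an open neighborhood of $x$ -- for a general bracketing this requires openness of the word map in some coordinate, which is less immediate in a non-associative setting where right translations need not be open maps. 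Both arguments finish identically: $\overline{V_{x}}\subset\langle U\setminus\overline{V_{x}}\rangle$ forces $U\subset\langle U\setminus\overline{V_{x}}\rangle$ and hence $\langle U\setminus\overline{V_{x}}\rangle=\langle U\rangle=G$, with the clopen refinement in the zero-dimensional case handled the same way in both proofs.
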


\begin{proof}
Let $U$ be a nonempty open subset which generates $G$. Take an arbitrary point $x\in U$. Since $G$ is not discrete, it is obvious that $U\setminus \{x\}$ is dense in $U$, then it follows that $\langle U\setminus \{x\}\rangle$ is dense in $\langle U\rangle =G$. Moreover, since $U\setminus \{x\}$ is open in $G$ and every open subgyrogroup is closed in $G$ by \cite[Proposition 7]{AW}, we can conclude that $\langle U\setminus \{x\} \rangle$ is open and closed in $G$. Therefore, $\langle U\setminus \{x\} \rangle =G$.

Since $x\in \langle U\setminus \{x\} \rangle$, there exist $y_{1}, y_{2},\ldots, y_{n}\in U\setminus \{x\}$, $\varepsilon _{1},\varepsilon _{2},\ldots ,\varepsilon _{n}\in \{1,-1\}$ and $m\in \mathbb{N}$ such that $x=f_{m}(\varepsilon_{1}y_{1}, \cdots, \varepsilon_{n}y_{n})$, where $$\varepsilon _{i}y_{i}=\left\{
\begin{matrix}
y_{i}, &\varepsilon _{i}=1;\\
\ominus y_{i}, &\varepsilon _{i}=-1.
\end{matrix}
\right.$$ Because each $y_{i}\neq x$, we can find an open neighborhood $O$ of $x$ such that $y_{i}\not \in \overline{O}\subset U$, for $i=1,\ldots ,n$. Then for each $i\in \{1,\ldots ,n\}$, there is an open neighborhood $O_{i}$ of $y_{i}$ such that $O_{i}\subset U$, $O\cap O_{i}=\emptyset$, and $f_{m}(\varepsilon_{1}O_{1}, \cdots, \varepsilon_{n}O_{n})\subset O$ by Lemma~\ref{l11}. Put $W=f_{m}(\varepsilon_{1}O_{1}, \cdots, \varepsilon_{n}O_{n})$. Then $W$ is an open neighborhood of $x$. By the regularity of $G$, there exists an open neighborhood $V_{x}$ of $x$ such that $\overline{V_{x}}\subset W\subset O$. Therefore, $O_{i}\subset U\setminus O\subset U\setminus \overline{V_{x}}$, for $i=1, 2, \ldots, n$. So, $\overline{V_{x}}\subset W\subset \langle U\setminus \overline{V_{x}}\rangle$. Thus $\langle U\setminus \overline{V_{x}}\rangle =\langle U\rangle =G$.

It is obvious that the last statement of this lemma.
\end{proof}

Now we can prove our main theorem in this section.

\begin{theorem}
Every countable Hausdorff topological gyrogroup $G$ belongs to $\mathcal{S}_{cg}$.
\end{theorem}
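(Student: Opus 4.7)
The plan is to split on whether $G$ is discrete. If $G$ is discrete, then $S := G \setminus \{0\}$ is itself closed, discrete in itself, and $\langle S \rangle = G$, so $G \in \mathcal S_{cg}$. Assume henceforth that $G$ is non-discrete. Every Hausdorff topological gyrogroup is $T_3$ (cited earlier from \cite{AW}); combined with countability this forces $G$ to be zero-dimensional, so clopen bases exist at every point. Applying Lemma~\ref{yl1} with $U = G$ and $x = 0$ yields a clopen neighbourhood $V_0$ of $0$ such that $W := G \setminus V_0$ is clopen, disjoint from a neighbourhood of $0$, and $\langle W \rangle = G$. The set $S$ will be constructed entirely inside $W$, which automatically gives $0 \notin \overline{S}$.

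Enumerate $G \setminus \{0\} = \{g_n : n \ge 1\}$ and construct, by induction on $n \ge 1$, an element $s_n$ and a decreasing sequence of clopen sets $W = W_0 \supseteq W_1 \supseteq \cdots$ with $\langle W_n \rangle = G$, while maintaining a ``pending list'' of elements of $G$ awaiting commitment as some $s_i$. At stage $n$: if the pending list is empty, pick the smallest-indexed unprocessed $g$ and, using $\langle W_{n-1}\rangle = G$, express $g = f_m(\varepsilon_1 w_1, \ldots, \varepsilon_k w_k)$ with each $w_j \in W_{n-1}$; append $w_1, \ldots, w_k$ to the list. Let $s_n$ be the first entry of the list, remove it, and apply Lemma~\ref{yl1} to $W_{n-1}$ at $s_n$ to excise a clopen neighbourhood $V_{s_n}\subseteq W_{n-1}$ of $s_n$, shrunken if necessary (using zero-dimensionality) to avoid the remaining pending entries and the point $g_n$. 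A further application of Lemma~\ref{yl1} to excise a clopen neighbourhood of $g_n$ (if $g_n$ still lies in what remains) produces $W_n$ with $\langle W_n\rangle = G$, $s_n, g_n \notin W_n$, and $W_n$ still containing all pending $w_j$'s.

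Set $S := \{s_n : n \in \mathbb{N}\}$. Three properties need checking. (1)~$\langle S\rangle = G$, since every $g_m$ eventually enters the pending list, and all of its entries are popped in subsequent stages, yielding $g_m = f_m(\varepsilon_1 s_{m_1}, \ldots, \varepsilon_k s_{m_k}) \in \langle S\rangle$. (2)~$S$ is discrete in itself, as the clopen neighbourhood $V_{s_n}$ of $s_n$ meets $S$ only at $s_n$: for $m > n$ one has $s_m \in W_{m-1}\subseteq W_n \subseteq W_{n-1}\setminus V_{s_n}$, and for $m < n$ one has $s_m \notin W_m \supseteq W_{n-1}\supseteq V_{s_n}$. (3)~$S$ is closed in $G$: $V_0$ separates $0$ from $S$, and each $g_k$ has the clopen neighbourhood $G \setminus W_k$ (recall $g_k \notin W_k$) meeting $S$ in only the finitely many $s_1,\ldots, s_k$.

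The main obstacle is that generation of $G$ naturally demands many elements close to $0$ (where points accumulate most densely), while closedness forbids $S$ from accumulating at $0$ or anywhere else. This tension is resolved by the initial move of removing the whole clopen neighbourhood $V_0$ of $0$ -- Lemma~\ref{yl1} guarantees that nothing generative is lost -- which forces $S \subseteq G \setminus V_0$, and by the iterated application of Lemma~\ref{yl1} at each stage to excise clopen neighbourhoods of both the chosen generator $s_n$ and the target $g_n$ while preserving the invariant $\langle W_n\rangle = G$. The bookkeeping in the pending list then guarantees that every queued element is converted into a finite gyrogroup word in the $s_i$'s after finitely many stages.
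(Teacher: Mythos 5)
Your proposal is correct and follows essentially the same route as the paper: reduce to the non-discrete case, use countability plus regularity to get zero-dimensionality, and then apply Lemma~\ref{yl1} inductively to excise clopen neighbourhoods (of the enumerated points and of the chosen generators) while preserving the invariant that the remainder generates $G$. Your ``pending list'' bookkeeping plays exactly the role of the paper's finite sets $S_n$ and conditions (i)--(v), so the two arguments differ only in presentation.
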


\begin{proof}
If $G$ is finitely generated or discrete, then the theorem is clear. Therefore, we may suppose that $G$ is neither finitely generated nor discrete. Enumerate $G$ as $\{g_{n}: n<\omega \}$. It suffice to find a subset $S$ in $G$ and an open neighborhood $U_{n}$ of $g_{n}$, for each $n<\omega $ satisfying $\langle S\rangle =G$ and $U_{n}\cap S$ is finite.

Next we will by induction to find a clopen set $V_{n}$ in $G$ and a finite set $S_{n}\subset G$ for each $n<\omega $ so that the following conditions hold:

\smallskip
(i) $g_{n}\in \bigcup_{i=0}^{n}V_{i}$ for each $n\in\omega$;

\smallskip
(ii) $G=\langle G\setminus (\bigcup_{i=0}^{n}V_{i})\rangle$ for each $n\in\omega$;

\smallskip
(iii) for each $n>0$, $V_{n}\subset G\setminus (\bigcup_{i=0}^{n-1}V_{i})$;

\smallskip
(iv) $V_{i}\cap S_{n}=\emptyset $, for $i<n$; and

\smallskip
(v) $g_{n}\in \langle \bigcup_{i=0}^{n}S_{i}\rangle$ for each $n\in\omega$.

\smallskip
Then set $U_{n}=\bigcup_{i=0}^{n}V_{i}$ for each $n\in\omega$, and put $S=\bigcup _{n<\omega}S_{n}$. Clearly, $\langle S\rangle =G$ and $U_{n}\cap S$ is finite for each $n\in\omega$.

Therefore, it suffices to construct $S_{n}$ and $V_{n}$ inductively as follows.

Set $S_{0}=\{g_{0}\}$. Since every Hausdorff topological gyrogroup is regular and every countable non-empty regular space is zero-dimensional \cite[Corollary 6.2.8]{E}, it follows that the countable topological gyrogroup $G$ is zero-dimensional. By Lemma \ref{yl1}, there exists a clopen neighborhood $V_{0}$ of $g_{0}$ such that $G=\langle G\setminus V_{0}\rangle$.

Assume that the finite sets $S_{0}, S_{1}, \ldots, S_{k}$ and clopen sets $V_{0}, V_{1}, \ldots, V_{k}$ have been defined satisfying the above properties (i)-(v). Clearly, if $g_{k+1}\in \langle \bigcup_{i=0}^{k}S_{i}\rangle$, then set $S_{k+1}=\emptyset$. If $g_{k+1}\not \in \langle \bigcup_{i=0}^{k}S_{i}\rangle$, then it follows from (ii) that there exist $$y_{1},y_{2},\ldots ,y_{m}\in G\setminus (\bigcup_{i=0}^{k}V_{i}), \varepsilon _{1},\varepsilon _{2},\ldots ,\varepsilon _{m}\in \{1,-1\}$$ and $n\in\mathbb{N}$ such that $$g_{k+1}=f_{n}(\varepsilon_{1}y_{1}, \cdots, \varepsilon_{m}y_{m}).$$ Set $S_{k+1}=\{y_{1},y_{2},\ldots ,y_{m}\}$. Thus both (iv) and (v) are satisfied.

Obviously, if $g_{k+1}\in \bigcup_{i=0}^{k}V_{i}$, then put $V_{k+1}=\emptyset$. If $g_{k+1}\not \in \bigcup_{i=0}^{k}V_{i}$, then it follows from Lemma \ref{yl1} that there exists a clopen neighborhood $V_{k+1}$ of $g_{k+1}$ such that $V_{k+1}\subset G\setminus (\bigcup_{i=0}^{k}V_{i})$ and $G=\langle G\setminus (\bigcup_{i=0}^{k+1}S_{i})\rangle$. Then (i)-(iii) are all satisfied.

Therefore, the sets $S_{n}$ and $V_{n}$ are defined for all $n$ with the required properties.
\end{proof}

\begin{corollary}\cite{CMRS1998}
Every countable Hausdorff topological group $G$ belongs to $\mathcal{S}_{cg}$.
\end{corollary}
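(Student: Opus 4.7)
My plan is to enumerate $G=\{g_n:n<\omega\}$ and build, by induction on $n$, a finite set $S_n\subset G$ together with a clopen set $V_n\subset G$ satisfying five conditions: (i) $g_n\in\bigcup_{i\le n}V_i$; (ii) $\langle G\setminus\bigcup_{i\le n}V_i\rangle=G$; (iii) $V_n$ is disjoint from $V_0,\dots,V_{n-1}$; (iv) $S_n\cap V_i=\emptyset$ for all $i<n$; and (v) $g_n\in\langle\bigcup_{i\le n}S_i\rangle$. Granted such sequences, put $S=\bigcup_n S_n$ and $U_n=\bigcup_{i\le n}V_i$. Condition (v) gives $\langle S\rangle=G$ outright. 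Each $U_n$ is a clopen neighborhood of $g_n$ by (i) and (iii), and $U_n\cap S$ is finite (only the finitely many $S_j$ with $j\le n$ can contribute, since for $j>n$ condition (iv) forces $S_j$ to miss every $V_i$ with $i\le n$). Hence $S$ is a closed discrete subset of $G$ that generates $G$, witnessing $G\in\mathcal{S}_{cg}$.

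Before beginning, I would dispose of the trivial situations in which $G$ is discrete or finitely generated, and record the one structural fact that the whole argument relies on: a countable Hausdorff topological gyrogroup is regular by \cite{AW}, and every countable nonempty regular space is zero-dimensional by \cite[Corollary 6.2.8]{E}. Zero-dimensionality is what lets me apply Lemma~\ref{yl1} in its clopen form at every stage. The base step is immediate: set $S_0=\{g_0\}$ and apply Lemma~\ref{yl1} with $U=G$ to obtain a clopen neighborhood $V_0$ of $g_0$ with $\langle G\setminus V_0\rangle=G$.

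At the inductive stage $k+1$, I would pick $S_{k+1}$ first and $V_{k+1}$ second. If $g_{k+1}$ already lies in $\langle\bigcup_{i\le k}S_i\rangle$, set $S_{k+1}=\emptyset$; otherwise, using (ii) at stage $k$, write $g_{k+1}=f_m(\varepsilon_1 y_1,\dots,\varepsilon_r y_r)$ with each $y_j$ drawn from $G\setminus\bigcup_{i\le k}V_i$, and put $S_{k+1}=\{y_1,\dots,y_r\}$. This guarantees (iv) and (v) at the new stage. Then, if $g_{k+1}\in\bigcup_{i\le k}V_i$, set $V_{k+1}=\emptyset$; otherwise apply Lemma~\ref{yl1} to the clopen generating set $U=G\setminus\bigcup_{i\le k}V_i$ to produce a clopen neighborhood $V_{k+1}\subset U$ of $g_{k+1}$ with $\langle U\setminus V_{k+1}\rangle=G$, which secures (i), (ii), and (iii) at stage $k+1$.

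The crux of the argument is maintaining (ii) throughout: each newly removed clopen chunk $V_{k+1}$ must be small enough that the remainder still generates all of $G$. This is exactly what Lemma~\ref{yl1} delivers, ultimately resting on the fact that an open subgyrogroup of $G$ is also closed (\cite[Proposition 7]{AW}) to promote density into equality. The one piece of bookkeeping to watch is the order of the two choices at each stage: $S_{k+1}$ has to be drawn from outside $\bigcup_{i\le k}V_i$ so that (iv) holds, and $V_{k+1}$ must then be drawn from the same complement so that (iii) and (ii) are preserved. Once this is arranged, properties (i)--(v) carry the construction through and the theorem follows.
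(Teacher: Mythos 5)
Your argument is correct and is essentially the paper's own proof of the preceding theorem (the gyrogroup case), of which this corollary is the immediate specialization to topological groups: same enumeration, same five inductive conditions, same use of zero-dimensionality via \cite[Corollary 6.2.8]{E} and the clopen form of Lemma~\ref{yl1}. The only difference is presentational — the paper derives the corollary by citing the theorem directly rather than rerunning the construction.
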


\section{A strongly topological gyrogroup with a suitable set}
In this section, we mainly prove that every separable metrizable strongly topological gyrogroup has a suitable set. First, we need some lemmas.

\begin{lemma}\label{yl2}
Suppose that $(G,\tau ,\oplus)$ is a strongly topological gyrogroup with a symmetric neighborhood base $\mathscr U$ at $0$. Suppose further that $U,V,W$ are all open neighborhoods of $0$ such that $V\oplus V\subset W$, $W\oplus W\subset U$ and $V, W\in \mathscr U$. If a subset $A$ of $G$ is $U$-disjoint
(that is, if $b\not\in a\oplus U$, for any distinct $a, b\in  A$), then for each $x\in G$ the set
$x\oplus V$ intersects at most one of the element of the family $\{a\oplus V: a\in A\}$. In particular,
the family of open sets $\{a\oplus V:a\in A\}$ is discrete in $G$.
\end{lemma}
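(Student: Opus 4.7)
The plan is to prove the contrapositive pairwise: if $x\oplus V$ meets both $a\oplus V$ and $b\oplus V$ for some $a,b\in A$, then $a=b$. The discreteness of $\{a\oplus V:a\in A\}$ is then immediate, since for any $x\in G$ the set $x\oplus V$ is an open neighborhood of $x$ (because $0\in V$ and left translations are homeomorphisms) meeting at most one member of the family. Suppose for contradiction that there exist distinct $a,b\in A$ and $v_1,v_2,w_1,w_2\in V$ with $a\oplus w_1 = x\oplus v_1$ and $b\oplus w_2 = x\oplus v_2$. The goal will be to show $\ominus a\oplus b\in U$, for then $b = a\oplus(\ominus a\oplus b)\in a\oplus U$ contradicts $U$-disjointness of $A$.

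The key tool is the gyrogroup identity
\[
(p\oplus q)\oplus\bigl(\ominus\gyr[p,q](q)\bigr) = p,
\]
a "right-cancellation up to a gyration" which follows from gyroassociativity and the fact that each $\gyr[p,q]$ is an automorphism and hence commutes with $\ominus$. Combined with the strong topology hypothesis $\gyr[p,q](U_0)=U_0$ for all $p,q\in G$ and $U_0\in\mathscr U$, this guarantees that removing a member of a gyration-invariant symmetric neighborhood from the right leaves us inside the same neighborhood.

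First I would reduce from the $V$-level to the $W$-level. From $a\oplus w_1 = x\oplus v_1$, left cancellation together with gyroassociativity gives $(\ominus a\oplus x)\oplus v_1' = w_1$, where $v_1' := \gyr[\ominus a,x](v_1)\in V$. The key identity then yields
\[
\ominus a\oplus x = w_1\oplus\bigl(\ominus\gyr[\ominus a\oplus x,\,v_1'](v_1')\bigr) \in V\oplus V\subset W,
\]
using the symmetry of $V$ and the fact that the outer gyration again preserves $V$. Symmetrically, $\ominus b\oplus x\in W$, so there are $u_1,u_2\in W$ with $x = a\oplus u_1 = b\oplus u_2$. Applying the same computation a second time with the pair $(V,W)$ replaced by $(W,U)$ --- legitimate because $W\in\mathscr U$ is gyration-invariant and $W\oplus W\subset U$ --- produces $\ominus a\oplus b\in W\oplus W\subset U$, the promised contradiction.

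The main obstacle will be algebraic: the failure of associativity in gyrogroups makes a direct right-cancellation impossible. The strongly topological assumption plays exactly the right role, absorbing each extraneous gyration back into $\mathscr U$; once the identity $(p\oplus q)\oplus(\ominus\gyr[p,q](q))=p$ is in hand, the argument reduces to a two-step version of the classical topological-group proof.
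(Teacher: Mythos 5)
Your proof is correct and follows essentially the same route as the paper's: a two-step absorption $V\to W\to U$ driven by the right cancellation identity $(p\oplus q)\oplus\gyr[p,q](\ominus q)=p$ together with the invariance $\gyr[\cdot,\cdot](V)=V$, $\gyr[\cdot,\cdot](W)=W$ to absorb the stray gyrations. The only cosmetic difference is that you track the elements $\ominus a\oplus x$ and $\ominus a\oplus b$, whereas the paper shows $a,b\in x\oplus W$ and then concludes $b\in (a\oplus W)\oplus W\subset a\oplus U$; the underlying computations coincide.
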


\begin{proof}
We need to show that, for every $x\in G$, the open neighborhood $x\oplus V$ of $x$ intersects at most one element of the family $\{a\oplus V: a\in A\}$. We assume the contrary that, for some $x\in G$, there exist distinct elements $a,b\in A$ such that $(x\oplus V)\cap (a\oplus V)\not=\emptyset$ and $(x\oplus V)\cap (b\oplus V)\not=\emptyset$. We show that $b\in a\oplus U$ as follows.

Since $(x\oplus V)\cap (a\oplus V)\not=\emptyset$, we have that there exist $v_{1},v_{2}\in V$ such that $x\oplus v_{1}=a\oplus v_{2}$. Then, $a=(a\oplus v_{2})\oplus \gyr[a,v_{2}](\ominus v_{2})=(x\oplus v_{1})\oplus \gyr[a,v_{2}](\ominus v_{2})$. Therefore,
\begin{eqnarray}
a&\in &(x\oplus v_{1})\oplus \gyr[a,v_{2}](V)\nonumber\\
&=&(x\oplus v_{1})\oplus V\nonumber\\
&=&x\oplus (v_{1}\oplus \gyr[v_{1},x](V))\nonumber\\
&=&x\oplus (v_{1}\oplus V)\nonumber\\
&\subset &x\oplus (V\oplus V)\nonumber\\
&\subset &x\oplus W.\nonumber
\end{eqnarray}

Thus, $a\in x\oplus W$. By the same method, we also have $b\in x\oplus W$.

Therefore, there exists $w_{1}\in W$ such that $a=x\oplus w_{1}$. Then, $$x=a\oplus \gyr[x,w_{1}](\ominus w_{1})\in a\oplus \gyr[x,w_{1}](W)=a\oplus W.$$ Hence,
\begin{eqnarray}
b&\in &(a\oplus W)\oplus W\nonumber\\
&=&a\oplus (W\oplus \gyr[W,a](W))\nonumber\\
&=&a\oplus (W\oplus W)\nonumber\\
&\subset &a\oplus U.\nonumber
\end{eqnarray}
\end{proof}

Let $G$ be a topological gyrogroup. For $\kappa$ an infinite cardinal,
the topological gyrogroup $G$ is said to be {\it left $\kappa$-totally bounded} if for every nonempty open subset
$U$ of $G$ there is $F\subset G$ such that $|F|<\kappa$ and $G=F\oplus U$. We denote $lb(G)$ by the least cardinal $\kappa\geq\omega$ such that $G$ is left $\kappa$-totally bounded. Each left $\omega$-totally bounded topological gyrogroup is also called {\it left precompact}.

\begin{lemma}\label{lll}
Let $G$ be a strongly topological gyrogroup with $lb(G)=\kappa$. If $\tau<\kappa$, then there exist an open neighborhood $V$ of $0$ and a subset $\{p_{\alpha}: \alpha<\tau\}$ such that for each $p\in G$ the set $p\oplus V$ intersects at most one of the elements of the family $\{p_{\alpha}\oplus V: \alpha<\tau\}$.
\end{lemma}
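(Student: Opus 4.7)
The plan is to reduce the statement to Lemma~\ref{yl2}: produce, from the hypothesis $\tau < lb(G)$, a symmetric neighborhood $U$ of $0$ and a $U$-disjoint set $A = \{p_{\alpha} : \alpha < \tau\}$ in $G$, and then shrink inside $\mathscr U$ to a $V$ of the form demanded by that lemma. Since $\tau < \kappa = lb(G)$, the gyrogroup $G$ is not left $\tau$-totally bounded, so by the definition of $lb(G)$ there exists an open neighborhood $W_{0}$ of $0$ with $G \neq F \oplus W_{0}$ for every $F \subset G$ with $|F| < \tau$. Fixing a symmetric $U \in \mathscr U$ with $U \subset W_{0}$, the same non-covering property persists for $U$ (because $F \oplus U \subset F \oplus W_{0}$), and $U$ is automatically $\gyr$-invariant since $G$ is strongly topological.

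With $U$ in hand I would construct $A$ by transfinite recursion on $\alpha < \tau$: at stage $\alpha$ the set $\bigcup_{\beta < \alpha}(p_{\beta} \oplus U)$ is a union of fewer than $\tau$ left translates of $U$, so by the choice of $U$ it does not cover $G$, and we pick $p_{\alpha}$ outside this union. This guarantees $p_{\alpha} \notin p_{\beta} \oplus U$ for all $\beta < \alpha$, and in particular the $p_\alpha$ are pairwise distinct. The crux is to upgrade this one-sided condition to genuine $U$-disjointness, i.e.\ to check also $p_{\beta} \notin p_{\alpha} \oplus U$. Applying the gyrogroup identity $\ominus(x \oplus y) = \gyr[x,y](\ominus y \oplus \ominus x)$ with $x = \ominus p_{\beta}$ and $y = p_{\alpha}$, together with $\gyr[x,y]^{-1} = \gyr[y,x]$, one obtains
\[
\ominus p_{\alpha} \oplus p_{\beta} \;=\; \gyr[p_{\alpha}, \ominus p_{\beta}]\bigl(\ominus(\ominus p_{\beta} \oplus p_{\alpha})\bigr).
\]
Because $U$ is symmetric and $\gyr$-invariant, membership of $\ominus p_{\alpha} \oplus p_{\beta}$ in $U$ is equivalent to that of $\ominus p_{\beta} \oplus p_{\alpha}$; by left cancellation this is exactly the statement that $p_{\beta} \in p_{\alpha} \oplus U$ if and only if $p_{\alpha} \in p_{\beta} \oplus U$, so $A$ is $U$-disjoint.

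Finally, using joint continuity of $\oplus$ and that $\mathscr U$ is a neighborhood base at $0$, pick $W \in \mathscr U$ with $W \oplus W \subset U$ and then $V \in \mathscr U$ with $V \oplus V \subset W$. Lemma~\ref{yl2} applied to the $U$-disjoint set $A$ with these $U$, $V$, $W$ delivers exactly the required conclusion. The only delicate step is the two-sidedness in the construction of $A$: in a topological group it would follow at once from $U = U^{-1}$, but in a gyrogroup it forces an appeal to the formula for $\ominus(x \oplus y)$ and crucially uses the $\gyr$-invariance of $U$, which is precisely where the ``strongly'' in the hypothesis is needed.
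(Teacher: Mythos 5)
Your proof follows essentially the same route as the paper's: use $\tau<lb(G)$ to produce a neighborhood $U$ witnessing the failure of left total boundedness, build $\{p_{\alpha}:\alpha<\tau\}$ by transfinite recursion so that $p_{\alpha}\notin\bigcup_{\beta<\alpha}(p_{\beta}\oplus U)$, and then invoke Lemma~\ref{yl2}. Your version is in fact slightly more careful than the paper's on two points: you correctly demand that no $F$ with $|F|<\tau$ (rather than $|F|\leq\tau$) satisfies $G=F\oplus U$, which is what $\tau<lb(G)$ actually yields and exactly what the recursion needs, and you verify via the identity $\ominus(x\oplus y)=\gyr[x,y](\ominus y\oplus\ominus x)$ that the one-sided recursion condition upgrades to the symmetric $U$-disjointness required by the statement of Lemma~\ref{yl2}.
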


\begin{proof}
Since $lb(G)=\kappa$ and $\tau<\kappa$, it follows that there exists a nonempty open neighborhood $U$ of $0$ in $G$ such that no $F\subset G$ with $|F|\leq\tau$ satisfies $G=F\oplus U$. By induction, it is easy to find a set
$\{p_{\alpha}: \alpha<\tau\}$ such that each $p_{\alpha}$ satisfies $p_{\alpha}\not\in\bigcup_{\beta<\alpha}(p_{\beta}\oplus U)$. Then, from Lemma~\ref{yl2} we can find a nonempty open neighborhood $V$ of $0$ in $G$ such that for each $p\in G$ the set $p\oplus V$ intersects at most one of the elements of the family $\{p_{\alpha}\oplus V: \alpha<\tau\}$.
\end{proof}

The strongly topological gyrogroup $G$ in Example~\ref{lz1} is left precompact and non-pseudocompact. However, the following result shows that each pseudocompact strongly topological gyrogroup is left precompact.

\begin{theorem}\label{ttt}
Suppose that $(G, \tau, \oplus)$ is a strongly topological gyrogroup with a symmetric open neighborhood base $\mathscr U$ at $0$. If $G$ is pseudocompact, then it will be left precompact.
\end{theorem}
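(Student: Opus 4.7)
The plan is to argue by contradiction: assume that $G$ is not left precompact, so that $lb(G) > \omega$. Since $\omega < lb(G)$, I would apply Lemma~\ref{lll} with $\tau = \omega$ to obtain an open neighborhood $V$ of $0$ together with a countably infinite set $\{p_{n}: n<\omega\}$ in $G$ with the property that for every $p\in G$, the neighborhood $p\oplus V$ meets at most one of the sets $p_{n}\oplus V$.

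This property says exactly that the family $\mathscr F = \{p_{n}\oplus V : n<\omega\}$ is a discrete family of open subsets of $G$, and each element of $\mathscr F$ is nonempty since $p_{n}\in p_{n}\oplus V$. Thus $\mathscr F$ is an infinite discrete (hence locally finite) family of nonempty open subsets of $G$.

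To reach the desired contradiction, I would invoke the standard characterization: in a Tychonoff space, pseudocompactness is equivalent to the nonexistence of an infinite locally finite family of nonempty open sets. Since $G$ is Hausdorff by our blanket convention and, as noted in the introduction, every $T_{0}$ strongly topological gyrogroup is completely regular (cf.~\cite{BL}), $G$ is Tychonoff. Hence the infinite discrete family $\mathscr F$ contradicts the assumed pseudocompactness of $G$, completing the proof.

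The main work has actually been absorbed into the earlier lemmas: Lemma~\ref{yl2} supplies the discreteness mechanism (using the symmetric neighborhood base $\mathscr U$ and the gyration-invariance that comes with being a strongly topological gyrogroup), and Lemma~\ref{lll} packages an inductive $U$-disjoint choice together with that mechanism to yield the desired countable family. The only delicate point to verify is that one is entitled to the Tychonoff-style characterization of pseudocompactness, which is why the Hausdorff standing hypothesis plus complete regularity of $T_{0}$ strongly topological gyrogroups must be cited; everything else is a direct application of the already-established lemmas.
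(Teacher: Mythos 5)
Your proof is correct, but it takes a genuinely different route from the paper's. The paper argues directly: given a symmetric neighborhood $U$ of $0$, it chooses $V,W\in\mathscr U$ with $V\oplus V\subset W$ and $W\oplus W\subset U$, uses Zorn's Lemma to produce a maximal $V$-disjoint set $A$, applies Lemma~\ref{yl2} to conclude that $\{a\oplus V:a\in A\}$ is discrete (hence finite by pseudocompactness), and then exploits maximality of $A$, via an explicit gyration computation, to show $A\oplus U=G$. You instead pass to the contrapositive through the cardinal invariant $lb(G)$: failure of left precompactness gives $\omega<lb(G)$, Lemma~\ref{lll} (which is legitimately available, being proved before the theorem and independently of it) supplies an infinite discrete family $\{p_{n}\oplus V:n<\omega\}$ of nonempty open sets, and this contradicts pseudocompactness. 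Both arguments ultimately rest on Lemma~\ref{yl2} --- yours via Lemma~\ref{lll}, whose proof is a transfinite $U$-disjoint selection followed by Lemma~\ref{yl2} --- and on the fact that a pseudocompact Tychonoff space has no infinite locally finite family of nonempty open sets; the paper uses that fact silently at this point, whereas you correctly flag that it requires complete regularity and cite the relevant result for strongly topological gyrogroups. Your route is shorter and delegates the combinatorics to an already-proved lemma, at the cost of being non-constructive: the paper's version actually exhibits the finite set $F$ with $F\oplus U=G$, and its ``maximal disjoint family covers'' computation is the template for similar covering steps elsewhere (e.g.\ Lemma~\ref{yl4}). One small point you could make explicit: the sets $p_{n}\oplus V$ are pairwise distinct, since applying the conclusion of Lemma~\ref{lll} at $p=p_{n}$ shows they are pairwise disjoint while each contains $p_{n}$, so the discrete family really is infinite.
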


\begin{proof}
Let $U$ be an arbitrary symmetric open neighborhood of $0$ in $G$ and $V,W\in \mathscr U$ such that $V\oplus V\subset W$ and $W\oplus W\subset U$. Let $$\mathscr F=\{A\subset G:(b\oplus V)\cap (a\oplus V)=\emptyset,\mbox{ for any distinct}\ a, b\in A\}.$$ Define $\leq $ in $G$ such that $A_{1}\leq A_{2}$ if and only if $A_{1}\subset A_{2}$, for any $A_{1},A_{2}\in \mathscr F$. Then, $(\mathscr F,\leq)$ is a poset and the union of any chain of $V$-disjoint sets is again a $V$-disjoint set. Therefore, it follows from Zorn's Lemma that there exists a maximal element $A$ in $\mathscr F$ so that $\{a\oplus V:a\in A\}$ is a disjoint family of non-empty open sets in $G$. By Lemma \ref{yl2}, the family of open sets $\{a\oplus V:a\in A\}$ is discrete in $G$. What's more, $G$ is pseudocompact, we have that $A$ is finite. Finally, we show that $A\oplus U=G$ as follows.

Take an arbitrary $x\in G$. If $x\not\in A$, then it follows from the maximality of $A$ that there exists $a\in A$ such that $(x\oplus V)\cap (a\oplus V)\not =\emptyset$. Then, there exist $v_{1},v_{2}\in V$ such that $x\oplus v_{1}=a\oplus v_{2}$. By the right cancellation law, we have that
\begin{eqnarray}
x&=&(x\oplus v_{1})\oplus \gyr[x,v_{1}](\ominus v_{1})\nonumber\\
&=&(a\oplus v_{2})\oplus \gyr[x,v_{1}](\ominus v_{1})\nonumber\\
&\in &(a\oplus v_{2})\oplus \gyr[x,v_{1}](V)\nonumber\\
&=&(a\oplus v_{2})\oplus V\nonumber\\
&=&a\oplus (v_{2}\oplus \gyr[v_{2},a](V))\nonumber\\
&=&a\oplus (v_{2}\oplus V)\nonumber\\
&\subset &a\oplus (V\oplus V)\nonumber\\
&\subset &a\oplus U.\nonumber
\end{eqnarray}
Therefore, $A\oplus U=G$.
\end{proof}

\begin{theorem}\label{dl1}
Suppose that $(G,\tau ,\oplus)$ is a strongly topological gyrogroup with a symmetric open neighborhood base $\mathscr U$ at $0$, and that $H$ is an open $L$-subgyrogroup of $G$. If $H$ has a suitable set, then $G$ has a suitable set. If $H$ has a closed suitable set, then $G$ has a closed suitable set.
\end{theorem}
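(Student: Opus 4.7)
The plan is to exploit the clopen partition of $G$ by left $H$-cosets. Since $H$ is open and every open subgyrogroup is also closed (by \cite[Proposition 7]{AW}, already cited in the excerpt), $H$ is clopen in $G$. Because $H$ is an $L$-subgyrogroup, the family $\{a \oplus H : a \in G\}$ is a partition of $G$; each coset $a\oplus H$ is clopen, being the image of $H$ under the homeomorphism $L_a(x) = a\oplus x$. Fix a suitable set $S_H$ for $H$, pick a transversal $T \subset G$ consisting of exactly one representative from each coset different from $H$, and set $S := S_H \cup T$.

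I would then verify the three defining properties of a suitable set for $G$. For \emph{discreteness}, every $s \in S_H$ is isolated inside $H$ by a neighborhood that automatically misses $T \subset G \setminus H$, and every $b \in T$ has the clopen neighborhood $b\oplus H$ meeting $T$ only in $\{b\}$ and disjoint from $S_H \subset H$. For \emph{closedness of $S \cup \{0\}$}, note that $S_H \cup \{0\}$ is closed in the closed subspace $H$, hence closed in $G$, and $T$ is closed because any $x \notin T$ either lies in $H$ (a neighborhood of $x$ missing $T$) or lies in some coset $b\oplus H$ with $b \in T$ and $b \neq x$, in which case $(b\oplus H) \setminus \{b\}$ is an open neighborhood of $x$ avoiding $T$ (using Hausdorffness so that $\{b\}$ is closed). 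For \emph{density}, $\langle S \rangle$ contains each $b \in T$ and all of $\langle S_H \rangle$ and is closed under $\oplus$, so $b \oplus \langle S_H \rangle \subset \langle S \rangle$ for every $b \in T$; translating by the homeomorphism $L_b$ carries the dense subset $\langle S_H \rangle$ of $H$ onto a dense subset of $b\oplus H$, whence $\overline{\langle S \rangle} \supseteq \bigcup_{b \in T \cup \{0\}} (b\oplus H) = G$.

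For the closed version of the statement, if $S_H$ is already closed in $H$, then $S_H$ is closed in $G$ (as $H$ is closed), and since $T$ is also closed, the set $S = S_H \cup T$ itself is closed in $G$. The main gyrogroup-specific obstacle I anticipate is precisely the coset-partition step: this is where the $L$-subgyrogroup hypothesis is essential, since for a general subgyrogroup the left cosets need not be disjoint and the partition argument collapses. Once the clopen partition is in hand, the remaining verifications use only that left translations are homeomorphisms and that $\langle S \rangle$ is closed under $\oplus$ and $\ominus$, so the argument proceeds in parallel with the classical topological-group case and does not even need to invoke the strongly topological neighborhood base $\mathscr U$.
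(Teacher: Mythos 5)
Your proposal is correct and follows essentially the same route as the paper: take a suitable set for $H$ together with a transversal of the (disjoint, clopen) left cosets of the $L$-subgyrogroup $H$, and check discreteness, closedness of $S\cup\{0\}$, and density of $\langle S\rangle$ coset by coset. The only cosmetic difference is that the paper's transversal $A$ also picks a nonzero point from the coset $H$ itself, while you omit it; your verifications are in fact more detailed than the paper's rather terse ones.
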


\begin{proof}
Let $S$ be a suitable set for $H$. Since $H$ is a $L$-subgyrogroup, two distinct cosets of $H$ are disjoint. Then let $A$ select one point from each coset of $H$ in $G$ such that $0\not\in A$ and $|A\cap (g\oplus H)|=1$ for each $x\in G$. We claim that $S\cup A$ is suitable for $G$.

Indeed, $S\cup\{0\}$ and $H$ are all closed in $G$, thus $S\cup A$ is discrete in $G\setminus\{0\}$, then there is at most an accumulation point $0$ since $S\cup A\cup\{0\}$ is closed in $G$. Now it suffices to prove that $\langle S\cup A\rangle$ is dense in $G$. Since $\langle S\rangle$ is dense in $H$, the subgyrogroup $\langle S\cup A\rangle$ is dense in $G$. If $S$ is closed in $H$, then $S\cup A$ is closed in $G$.
\end{proof}

However, the following question is open.

\begin{question}\label{q00}
Suppose that $(G,\tau ,\oplus)$ is a strongly topological gyrogroup, and that $H$ is an open subgyrogroup of $G$ with a suitable set. Does $H$ have a suitable set?
\end{question}

The following lemma gives a partial answer to Question~\ref{q00}.

\begin{lemma}\label{llll}
Suppose that $(G,\tau ,\oplus)$ is a separable strongly topological gyrogroup with a symmetric open neighborhood base $\mathscr U$ at $0$, $H$ is an open subgyrogroup of $G$. If $H$ has a suitable set, then $G$ has a suitable set. If $H$ has a closed suitable set, then $G$ has a closed suitable set.
\end{lemma}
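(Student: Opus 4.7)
The plan is to augment the given suitable set $S$ for $H$ by a countable, closed, discrete subset $A\subseteq G\setminus H$, so that $A\cup S$ becomes suitable for $G$. I would extract $A$ from a countable dense subset of $G$ by a greedy thinning procedure, with the $\gyr$-invariance of $\mathscr U$ playing the crucial role.

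First I would note that $H$ is clopen (an open subgyrogroup is closed by \cite[Proposition 7]{AW}), so $S\cup\{0\}$ is already closed in $G$. Then I fix a symmetric $U\in\mathscr U$ with $U\subseteq H$ and symmetric $V,W\in\mathscr U$ satisfying $V\oplus V\subseteq W$ and $W\oplus W\subseteq U$, so that Lemma~\ref{yl2} becomes available. Let $D\subseteq G$ be countable and dense. The central computation is that for every $g\in G$ and every $d\in D\cap(g\oplus U)$, one has $g\in d\oplus H$: writing $d=g\oplus u$ with $u\in U$ and applying (G3) to the triple $(g,u,\ominus u)$ yields $g=d\oplus\gyr[g,u](\ominus u)$, while the strongly topological hypothesis $\gyr[g,u](U)=U$ places the residue $\gyr[g,u](\ominus u)$ in $U\subseteq H$. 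In particular $G=\bigcup_{d\in D}(d\oplus H)$.

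Next I would enumerate $D\setminus H=\{d_n:n<\omega\}$ and construct $A\subseteq D\setminus H$ greedily, adding $d_n$ precisely when both $d_n\notin a\oplus U$ and $a\notin d_n\oplus U$ hold for every $a$ previously chosen; this keeps $A$ $U$-disjoint, and Lemma~\ref{yl2} then makes $\{a\oplus V:a\in A\}$ a discrete family in $G$, which forces $A$ itself to be a closed, discrete subset of $G$. Each $d_n$ not added to $A$ satisfies either $d_n\in a\oplus U$ (whence $d_n\in a\oplus H$ immediately) or $a\in d_n\oplus U$ for some $a\in A$; in the latter case the same (G3) computation, with $(g,d)$ replaced by $(d_n,a)$, gives $d_n=a\oplus\gyr[d_n,u](\ominus u)\in a\oplus H$. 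Hence $D\setminus H\subseteq\langle A\cup H\rangle$, and combined with $D\cap H\subseteq H=\overline{\langle S\rangle}\subseteq\overline{\langle A\cup S\rangle}$ this yields $D\subseteq\overline{\langle A\cup S\rangle}$, so $\overline{\langle A\cup S\rangle}=G$.

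Finally I would verify that $A\cup S$ is a suitable set for $G$: it is a disjoint union of discrete subsets of the two clopen pieces $G\setminus H$ and $H$, hence discrete in itself, and $(A\cup S)\cup\{0\}$ is closed since its traces in the two clopen pieces are. The closed-suitable version is automatic, because $S$ closed in $H$ is already closed in $G$ via the clopenness of $H$. The chief obstacle is the residue-passing step in the central computation: without the $\gyr$-invariance of $U$, the gyration term $\gyr[g,u](\ominus u)$ need not fall inside $H$, the countable coset cover fails, and density is lost. The delicate balance between $U$-disjointness (for discreteness) and preservation of the cover (for density) is handled by recording both $d_n\in a\oplus U$ and $a\in d_n\oplus U$ as blocking conditions in the greedy step.
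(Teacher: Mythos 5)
Your proof is correct, and while it follows the paper's high-level strategy --- keep $S$ inside the clopen subgyrogroup $H$ and adjoin a countable closed discrete set extracted greedily from a countable dense subset of $G$ --- the selection mechanism is genuinely different. The paper enumerates the dense set and picks one representative from each left coset $g\oplus H$ not yet covered, so the closed discreteness of the chosen set rests on the open cosets $g_{n_i}\oplus H$ behaving like a disjoint cover; for a subgyrogroup that is not an $L$-subgyrogroup this is delicate, since $a\in g\oplus H$ does not in general yield $a\oplus H=g\oplus H$. You instead fix a symmetric $U\in\mathscr U$ with $U\subseteq H$ and thin the dense set to a $U$-disjoint family, so closed discreteness comes directly from Lemma~\ref{yl2}, and the $\gyr$-invariance of $U$ (giving $\gyr[x,u](\ominus u)\in U\subseteq H$) replaces the missing coset identity when you show that every discarded point lands in some $a\oplus H$. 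This buys a cleaner discreteness argument and makes explicit where the strongly topological hypothesis enters; your symmetric two-sided blocking condition is exactly what the $U$-disjointness of Lemma~\ref{yl2} requires, since in a gyrogroup $b\in a\oplus U$ and $a\in b\oplus U$ are not equivalent even for symmetric $U$. The only step worth writing out in full is the passage from $D\setminus H\subseteq\bigcup_{a\in A}(a\oplus H)$ to the density of $\langle A\cup S\rangle$: either note that $a\oplus H=a\oplus\overline{\langle S\rangle}\subseteq\overline{a\oplus\langle S\rangle}\subseteq\overline{\langle A\cup S\rangle}$ by continuity of left translation, or invoke the fact that the closure of a subgyrogroup of a topological gyrogroup is again a subgyrogroup.
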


\begin{proof}
Let $S$ be a suitable set for $H$. Since $G$ is separable, there exists a countable subset $A=\{g_{n}: n\in\omega\}$ of $G$ such that $g_{0}=0$ and $\overline{A}=G$, then $A\oplus H=G$ since $H$ is open in $G$. Then, by induction on $n$, we can choose a subset $B$ of $A$ satisfies the following conditions:

\smallskip
(i) $B$ is closed discrete;

\smallskip
(ii) $\overline{\langle B\cup S\rangle}=G$;

\smallskip
(iii) $B\cap (g\oplus H)=\{g\}$.

\smallskip
Indeed, take $g_{0}=\{0\}$. If $H=G$, then let $B=\{0\}$; otherwise, $G\setminus H\neq\emptyset$, since $G\setminus H$ is open, there exists a minimum $n_{1}\in\mathbb{N}$ such that $g_{n_{1}}\in(g_{n_{1}}\oplus H)\setminus H$ and $g_{i}\in H$ for any $i<n_{1}$. Assume have defined the points $g_{0}, g_{n_{1}}, \cdots, g_{n_{k}}$ such that $g_{n_{i}}\in (g_{n_{i}}\oplus H)\setminus\bigcup_{j<i}(g_{n_{j}}\oplus H)$ for each $i\leq k$ and $g_{j}\in\bigcup_{i=0}^{k-1}(g_{n_{i}}\oplus H)$ for any $n_{m}\leq j<n_{m+1}$ and $m\leq k-1$. If $\bigcup_{i=0}^{k}(g_{n_{i}}\oplus H)=G$, let $B=\{g_{n_{i}}: i\leq k\}$; otherwise, the set $G\setminus\bigcup_{i\leq k}(g_{n_{i}}\oplus H)$ is a nonempty open subset of $G$, then there exists a minimum $n_{k+1}\in\mathbb{N}$ such that $g_{n_{k+1}}\in (g_{n_{k+1}}\oplus H)\setminus\bigcup_{i\leq k}(g_{n_{i}}\oplus H)$ and $g_{j}\in\bigcup_{i\leq k}(g_{n_{i}}\oplus H)$ for each $j\leq n_{k+1}$. If there exists $N\in\mathbb{N}$ such that $\bigcup_{i\leq N}(g_{n_{i}}\oplus H)=G$, then $B=\{g_{n_{i}}: i\leq N\}$ is a finite set; otherwise, put $B=\{g_{n_{i}}: i\in\omega\}$. By our construction of $B$, it is easy to see that $B$ satisfies the conditions (i)-(iii).

By (ii), $\langle S\cup B\rangle$ is dense in $G$. Moreover, $S\cup\{0\}$ and $H$ are all closed in $G$, thus $(S\cup A)\setminus\{0\}$ is discrete in $G\setminus\{0\}$, then there is at most an accumulation point $0$ since $S\cup A\cup\{0\}$ is closed in $G$. If $S$ is closed in $H$, then $S\cup A$ is closed in $G$.
\end{proof}

\begin{lemma}\label{yl4}
Suppose that $(G,\tau ,\oplus)$ is a strongly topological gyrogroup with a symmetric open neighborhood base $\mathscr U$ at $0$, $B$ is a left precompact subset of $G$ and $S$ is dense in $B$. Then, for every neighborhood $U$ of $0$ in $G$, there is a finite set $K\subset S$ such that $B\subset K\oplus U$.
\end{lemma}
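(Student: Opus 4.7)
The plan is to use the left precompactness of $B$ to cover it by finitely many translates of a \emph{smaller} neighborhood of $0$, and then use the density of $S$ in $B$ to replace the translating elements of $G$ by translating elements of $S$. The replacement step costs one extra application of the neighborhood, which is why I would start with a symmetric $V\in\mathscr U$ satisfying $V\oplus V\subset U$; this $V$ exists since $\oplus$ is jointly continuous and $\mathscr U$ is a base at $0$.

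With $V$ in hand, left precompactness of $B$ furnishes a finite $F\subset G$ with $B\subset F\oplus V$. Discarding the $f\in F$ for which $(f\oplus V)\cap B=\emptyset$, I may assume that for every $f\in F$ the set $(f\oplus V)\cap B$ is a nonempty relatively open subset of $B$, so density of $S$ in $B$ lets me choose $s_f\in S\cap(f\oplus V)$. I then set $K=\{s_f:f\in F\}$, a finite subset of $S$, and the task reduces to verifying $B\subset K\oplus U$.

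For this, fix $x\in B$ and pick $f\in F$ with $x=f\oplus v_1$, $v_1\in V$. Writing $s_f=f\oplus v_2$ for some $v_2\in V$, the right cancellation identity $f=(f\oplus v_2)\oplus\gyr[f,v_2](\ominus v_2)$ already exploited in the proof of Theorem~\ref{ttt}, combined with the invariance $\gyr[f,v_2](V)=V$ valid because $V\in\mathscr U$, gives $f\in s_f\oplus V$. Substituting back and applying the gyroassociative law exactly once more, as in the displayed computation inside the proof of Theorem~\ref{ttt}, yields $x\in s_f\oplus(V\oplus V)\subset s_f\oplus U$.

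The main (mild) obstacle is the repeated appeal to the equality $\gyr[\,\cdot\,,\,\cdot\,](V)=V$, which is where the hypothesis that $G$ is a \emph{strongly} topological gyrogroup with base $\mathscr U$ at $0$ is used crucially; without it the gyrations would only send $V$ into some larger set and the chain $V\oplus V\subset U$ would not be enough. Once this invariance is invoked, the remaining manipulation is a direct transcription of the algebra already performed in Theorem~\ref{ttt}, so I expect the proof to be short.
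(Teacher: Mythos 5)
Your proposal is correct and follows essentially the same route as the paper's proof: choose $V\in\mathscr U$ with $V\oplus V\subset U$, cover $B$ by finitely many translates $f\oplus V$, use density of $S$ in $B$ to pick $s_f\in S\cap(f\oplus V)$ for each translate meeting $B$, and then combine the right cancellation identity, the invariance $\gyr[\cdot,\cdot](V)=V$, and the gyroassociative law to get $B\subset K\oplus(V\oplus V)\subset K\oplus U$. The paper's displayed computation is exactly the algebra you describe, so there is nothing to add.
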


\begin{proof}
We assume that $U$ is an arbitrary neighborhood of $0$ in $G$ and $V\in \mathscr U$ such that $V\oplus V\subset U$. Since $B$ is left precompact in $G$, there exists a finite set $F$ in $G$ such that $B\subset F\oplus V$. Take an arbitrary $x\in F$ such that $B\cap (x\oplus V)\not =\emptyset$. Then $S\cap (x\oplus V)\not =\emptyset$ and we pick a point $y_{x}\in S\cap (x\oplus V)$. Then the finite set $$K_{1}=\{y_{x}:x\in F\mbox{ and } B\cap (x\oplus V)\not =\emptyset\}$$ is contained in $S$. We claim $B\subset K_{1}\oplus U$.

Indeed, if $b\in B$, then there exists $x\in F$ such that $b\in x\oplus V$, so $b\in B\cap (x\oplus V)\not =\emptyset$. Therefore, $y_{x}\in x\oplus V$. We can find $v_{1}\in V$ such that $y_{x}=x\oplus v_{1}$. Then
\begin{eqnarray}
x&=&(x\oplus v_{1})\oplus \gyr[x,v_{1}](\ominus v_{1})\nonumber\\
&=&y_{x}\oplus \gyr[x,v_{1}](\ominus v_{1})\nonumber\\
&\in &y_{x}\oplus \gyr[x,v_{1}](V)\nonumber\\
&=&y_{x}\oplus V.\nonumber
\end{eqnarray}
Thus,
\begin{eqnarray}
b&\in &x\oplus V\nonumber\\
&\subset &(y_{x}\oplus V)\oplus V\nonumber\\
&=&y_{x}\oplus (V\oplus \gyr[V,y_{x}](V))\nonumber\\
&=&y_{x}\oplus (V\oplus V)\nonumber\\
&\subset &y_{x}\oplus U\nonumber\\
&\subset &K_{1}\oplus U.\nonumber
\end{eqnarray}
\end{proof}

\begin{lemma}\label{yl5}
Every subgyrogroup $H$ of a left precompact strongly topological gyrogroup $G$ is left precompact.
\end{lemma}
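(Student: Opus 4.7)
The plan is to reduce the statement to an application of Lemma~\ref{yl4}. Fix an arbitrary open neighborhood $V$ of $0$ in the relative topology of $H$, and choose an open subset $W$ of $G$ with $V=H\cap W$. Using the distinguished base, I pick a symmetric $U\in\mathscr U$ with $U\oplus U\subset W$; since $0\in U$, this forces the simple but crucial inclusion $U\subset W$.

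Next I would observe that $H$, viewed as a subset of $G$, is itself left precompact in the sense required by Lemma~\ref{yl4}: because $G$ is left precompact, for any neighborhood $U$ of $0$ in $G$ there is a finite $F\subset G$ with $G=F\oplus U$, and so in particular $H\subset F\oplus U$. Since $H$ is trivially dense in itself, applying Lemma~\ref{yl4} with $B=S=H$ produces a finite subset $K\subset H$ with $H\subset K\oplus U$.

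It remains to strengthen $K\oplus U$ to $K\oplus V$, i.e., to force the translating element to lie inside $H$. Take any $h\in H$ and write $h=k\oplus u$ with $k\in K\subset H$ and $u\in U$. By the left cancellation law in the gyrogroup $G$, namely $\ominus k\oplus(k\oplus u)=u$, one has $u=\ominus k\oplus h$, and since $k,h\in H$ and $H$ is closed under $\oplus$ and $\ominus$, this yields $u\in H$. Combined with $u\in U\subset W$, this gives $u\in H\cap W=V$, and hence $h\in K\oplus V$. Therefore $H=K\oplus V$ with $K\subset H$ finite, which is exactly the condition that $H$ be left precompact.

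The only point that requires any care is the last step, where the translating parameter $u$ has to be returned to $H$; this relies on nothing more than the gyrogroup left cancellation identity together with the subgyrogroup closure of $H$, so there is no real obstacle. The choice $U\oplus U\subset W$ was made only to guarantee $U\subset W$ after observing $0\in U$; in fact $U\subset W$ alone would suffice, so one could equally well just take any symmetric $U\in\mathscr U$ with $U\subset W$.
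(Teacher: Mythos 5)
Your proof is correct and follows essentially the same route as the paper's: apply Lemma~\ref{yl4} with $B=S=H$ to obtain a finite $K\subset H$ with $H\subset K\oplus U$, then use left cancellation and the closure of $H$ under $\oplus$ and $\ominus$ to pull the translating element back into $H\cap W=V$. The only cosmetic difference is that the paper feeds the $G$-open set $W$ itself into Lemma~\ref{yl4}, so your intermediate shrinking to a symmetric $U\in\mathscr U$ with $U\oplus U\subset W$ is, as you yourself note, unnecessary.
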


\begin{proof}
Take an arbitrary open neighborhood $U$ of $0$ in $H$, then there is an open neighborhood $V$ of $0$ in $G$ such that $V\cap H=U$. Since $H$ is a left precompact subset of $G$, by Lemma \ref{yl4}, we can find a finite set $F\subset H$ such that $H\subset F\oplus V$. Therefore, for every $h\in H$, there exist $f\in F$ and $v\in V$ such that $h=f\oplus v$. Thus, $v=(\ominus f)\oplus h\in H\oplus H\subset H$. Then, $v\in V\cap H=U$. It follows that $H\subset F\oplus U$, that is, $H=F\oplus U$.
\end{proof}

By Theorem~\ref{ttt} and Lemma~\ref{yl5}, we have the following corollary.

\begin{corollary}
Every subgyrogroup $H$ of a pseudocompact strongly topological gyrogroup $G$ is left precompact.
\end{corollary}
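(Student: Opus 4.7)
The plan is to simply combine the two previously established results, applied in sequence to the given data. First I would observe that the hypothesis of Theorem~\ref{ttt} is exactly satisfied by $G$: it is a pseudocompact strongly topological gyrogroup (with the symmetric open neighborhood base $\mathscr U$ at $0$ witnessing that $G$ is strongly topological). Applying Theorem~\ref{ttt}, I conclude that $G$ itself is left precompact.

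Next, since $H$ is a subgyrogroup of $G$ and $G$ has now been shown to be a left precompact strongly topological gyrogroup, I would invoke Lemma~\ref{yl5}, which asserts exactly that every subgyrogroup of a left precompact strongly topological gyrogroup is left precompact. This immediately yields the conclusion that $H$ is left precompact, completing the argument.

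There is essentially no obstacle here: the corollary is a direct composition of the two preceding results, and the hypotheses match verbatim. The only thing worth noting is that one does not attempt to work with $H$ directly (e.g., trying to prove pseudocompactness passes to subgyrogroups, which is false in general); the key insight is that pseudocompactness of the ambient gyrogroup is only needed to pass to left precompactness of $G$ via Theorem~\ref{ttt}, after which left precompactness alone suffices to descend to $H$ via Lemma~\ref{yl5}.
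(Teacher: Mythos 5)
Your proof is correct and matches the paper exactly: the corollary is stated there as an immediate consequence of Theorem~\ref{ttt} (pseudocompact implies left precompact) followed by Lemma~\ref{yl5} (subgyrogroups inherit left precompactness), which is precisely your two-step composition.
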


\begin{lemma}\label{yl3}
Suppose that $(G,\tau ,\oplus)$ is a strongly topological gyrogroup with a symmetric open neighborhood base $\mathscr U$ at $0$, and suppose that $G$ is non-pseudocompact left precompact with a countable dense subgyrogroup $P$. Then there exists a subset $L\subset P$ such that $L$ is closed discrete in $G$ and $\langle L\rangle =P$. In particular, $L$ is suitable for $G$.
\end{lemma}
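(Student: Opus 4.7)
My plan is to enumerate $P=\{p_n:n\in\omega\}$ with $p_0=0$ and build $L$ by an inductive ``slot-filling'' procedure. I use the non-pseudocompactness of $G$ (together with the fact that every $T_0$ strongly topological gyrogroup is Tychonoff, as cited in the introduction) to fix a countable discrete family $\mathcal{W}=\{W_n:n\in\omega\}$ of pairwise disjoint nonempty open subsets of $G$; density of $P$ then yields $W_n\cap P\neq\emptyset$ for every $n$, and any transversal of $\mathcal{W}$ is automatically closed discrete in $G$.

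I then construct finite sets $F_n\subset P$ and a strictly increasing sequence of indices $k(n)$ so that each $F_n$ lies in $W_{k(n)}\cup W_{k(n)+1}$ and $p_n\in\langle F_0\cup\cdots\cup F_n\rangle$, and I set $L=\bigcup_{n}F_n$. When $p_n\in\langle F_0\cup\cdots\cup F_{n-1}\rangle$, I let $F_n$ be a singleton in $W_{k(n)}\cap P$. Otherwise I apply a two-point trick: choose $a\in W_{k(n)}\cap P$ such that $b:=a\oplus p_n\in W_{k(n)+1}\cap P$, and put $F_n=\{a,b\}$; left cancellation in gyrogroups then gives $p_n=\ominus a\oplus b\in\langle\{a,b\}\rangle$, as required. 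Since each point of $G$ has a neighborhood meeting at most one $W_n$, and hence at most finitely many points of $L$, the set $L$ will be closed discrete in $G$; and $\langle L\rangle\supseteq P$ by the inductive condition, while $\langle L\rangle\subseteq P$ because $L\subset P$ and $P$ is a subgyrogroup. Together with density of $\overline{P}=G$, this gives that $L$ is suitable, with the additional property $\langle L\rangle=P$.

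The hard part will be the matching step: for each $n$ I must be able to choose $a\in W_{k(n)}\cap P$ so that the right translate $a\oplus p_n$ falls into some unused slot $W_j$. Because right translation $R_{p_n}:x\mapsto x\oplus p_n$ is a homeomorphism of $G$ (from joint continuity of $\oplus$ and continuity of $\ominus$), the candidate set $W_{k(n)}\cap R_{p_n}^{-1}(W_j)$ is open, and the task reduces to making it nonempty for some fresh $j$. To secure this simultaneously for countably many $p_n$, I plan to refine $\mathcal{W}$ at the outset: using Lemma~\ref{yl4} together with the left precompactness of $G$, every nonempty open set $R_{p_n}(W_k)$ is covered by finitely many translates of a small $U\in\mathscr U$ by points of $P$, so by starting with a sufficiently rich infinite discrete family and relabeling, I can arrange that for every $p\in P$ the translate $R_p(W_k)$ meets infinitely many members of $\mathcal{W}$. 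Once this robustness is in place, the inductive matching becomes a straightforward bookkeeping argument, completing the construction of the suitable set $L\subset P$ with $\langle L\rangle=P$.
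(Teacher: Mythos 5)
Your overall architecture (place generators inside a countable discrete family $\mathcal W=\{W_n:n\in\omega\}$ of open sets so that closed discreteness of $L$ is automatic, and use the left cancellation law $p_n=\ominus a\oplus(a\oplus p_n)$ to encode $p_n$ by two points of $P$ sitting in two slots) is genuinely different from the paper's, and the algebraic part of the two-point trick is fine. The gap is exactly at the step you yourself flag as ``the hard part'': you need, at stage $n$, some $a\in W_{k(n)}\cap P$ with $a\oplus p_n$ landing in a fresh slot, i.e.\ you need the open set $W_{k(n)}\oplus p_n$ to meet infinitely many members of $\mathcal W$, simultaneously for all $p\in P$ and all $k$. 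You assert this can be prearranged ``by starting with a sufficiently rich infinite discrete family and relabeling,'' citing Lemma~\ref{yl4}; but Lemma~\ref{yl4} is a finite-covering (smallness) statement and gives no lower bound on how many members of $\mathcal W$ a translate can meet, so it cannot deliver the required richness. Moreover the property genuinely fails for natural choices of $\mathcal W$: take $G=P$ a countable dense subgroup of the circle $\mathbb{T}$ (left precompact, metrizable, countable and non-compact, hence non-pseudocompact, and a strongly topological gyrogroup since it is a topological group), and let $W_n$ be the traces on $G$ of disjoint intervals shrinking to a fixed point $t\in\mathbb{T}\setminus G$. This family is discrete in $G$, yet for any $p$ bounded away from $0$ the translate $W_k+p$ sits near $t+p$ and meets only finitely many $W_j$ (all of which cluster at $t$). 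So the robustness you need is a special property that some discrete families have and others lack; nothing in the proposal shows that a family with this property exists in an arbitrary left precompact non-pseudocompact strongly topological gyrogroup, and even for one fixed $p$ the ``refinement'' is not described, let alone the diagonalization over all countably many pairs $(p,k)$. As written, the induction cannot get past its first nontrivial stage.

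For contrast, the paper sidesteps the matching problem entirely. It fixes a decreasing sequence $\{U_n\}$ of nonempty open sets with $\overline{U_{n+1}}\subset U_n$ and empty intersection of the closures (the witness of non-pseudocompactness), and builds an increasing chain of finite sets $L_k\subset P$ satisfying $x_k\in\langle L_k\rangle$, $L_{k+1}\setminus L_k\subset U_k$ and $G=\langle L_k\rangle\oplus U_k$. Left precompactness enters through Lemmas~\ref{yl4} and~\ref{yl5}: the subgyrogroup $\langle U_n\rangle$ is again left precompact, so one can pick a finite $K_{n+1}\subset U_n\cap P$ with $\langle K_{n+1}\rangle\oplus U_{n+1}\supset\langle U_n\rangle$, which is what pushes the ``error term'' in the factorization of $x_{n+1}$ into ever smaller $U_n$'s; closed discreteness of $L=\bigcup_kL_k$ then follows because $L\setminus\overline{U_k}\subset L_k$ is finite. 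In your scheme left precompactness does no comparable work --- it appears only inside the unproved refinement claim --- which is a warning sign. To salvage your approach you would have to actually construct a discrete family with the stated translation-robustness (plausible in concrete models like the circle, but requiring a completion-type argument you have not supplied); otherwise you should switch to the paper's strategy of factoring each $p_n$ over a fixed decreasing sequence with $\bigcap_n\overline{U_n}=\emptyset$.
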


\begin{proof}
Since $G$ is not pseudocompact, we can fix a sequence $\{U_{n}:n\in \omega\}$ of non-empty open subsets of $G$ such that $U_{n}\in \mathscr U$, ~$\overline{U_{n+1}}\subset U_{n}$ for each $n\in \omega$ and $\bigcap \{U_{n}:n\in \omega\}=\emptyset$. Let $\{x_{n}:n\in \omega \}$ be an enumeration of elements of $P$.

We construct an increasing sequence $\{L_{k}: k\in \omega\}$ of finite subsets of $P$ by induction which satisfies the following conditions:

\smallskip
(1) $x_{k}\in \langle L_{k}\rangle$;

\smallskip
(2) $L_{k+1}\setminus L_{k}\subset U_{k}$;

\smallskip
(3) $G=\langle L_{k}\rangle \oplus U_{k}$.

Since the subgyrogroup $P$ is dense in $G$, it follows from \cite[Lemma 9]{AW} that $G=\overline{P} \subset P\oplus U_{0}$. So $G=P\oplus U_{0}$. Since $G$ is left precompact, it follows from Lemma \ref{yl4} that we can find a finite subset $K_{0}$ of $P$ such that $K_{0}\oplus U_{0}=G$. Therefore, for any $x_{0}\in G$, there exist $a_{0}\in K_{0},u_{0}\in U_{0}$ such that $x_{0}=a_{0}\oplus u_{0}$. Then $u_{0}=(\ominus a_{0})\oplus x_{0}\in P$ and Let $L_{0}=K_{0}\cup \{u_{0}\}$.

We assume that for some $n\in \omega$ we have defined an increasing sequence $L_{0},\ldots ,L_{n}$ of finite subsets of $P$ which satisfies (1) and (3) for each $k\leq n$ and (2) for every $k<n$. Since $P$ is dense in $G$, it is clear that $\langle U_{n}\cap P\rangle$ is dense in the gyrogroup $G_{n}=\langle U_{n}\rangle$. Thus, $\langle U_{n}\cap P\rangle \oplus U_{n+1}=G_{n}$. It follows from Lemma \ref{yl5} that $G_{n}$ is left precompact, hence we can find a finite subset $F_{n+1}$ of $\langle U_{n}\cap P\rangle$ such that $F_{n+1}\oplus U_{n+1}=G_{n}$. Clearly, we can find a finite subset $K_{n+1}$ of $U_{n}\cap P$ with $F_{n+1}\subset \langle K_{n+1}\rangle$, so $\langle K_{n+1}\rangle \oplus U_{n+1}=G_{n}$. Let $L_{n+1}^{'}=L_{n}\cup K_{n+1}$. By (3), we have
\begin{eqnarray}
G&=&\langle L_{n}\rangle \oplus U_{n}\nonumber\\
&\subset &\langle L^{'}_{n+1}\rangle \oplus G_{n}\nonumber\\
&=&\langle L^{'}_{n+1}\rangle \oplus (\langle K_{n+1}\rangle \oplus U_{n+1})\nonumber\\
&=&(\langle L^{'}_{n+1}\rangle \oplus \langle K_{n+1}\rangle )\oplus \gyr[\langle L^{'}_{n+1}\rangle,\langle K_{n+1}\rangle](U_{n+1})\nonumber\\
&=&(\langle L^{'}_{n+1}\rangle \oplus \langle K_{n+1}\rangle)\oplus U_{n+1}\nonumber\\
&=&\langle L^{'}_{n+1}\rangle \oplus U_{n+1}.\nonumber
\end{eqnarray}
Therefore, there exist $a_{n+1}\in \langle L^{'}_{n+1}\rangle ,u_{n+1}\in U_{n+1}$ such that $x_{n+1}=a_{n+1}\oplus u_{n+1}$. Since $a_{n+1}\in \langle L^{'}_{n+1}\rangle \subset P$ and $x_{n+1}\in P$, it follows that $u_{n+1}=(\ominus a_{n+1})\oplus x_{n+1}\in P$. Then let $L_{n+1}=L_{n+1}^{'}\cup \{u_{n+1}\}$. It is clear that $L_{n+1}$ is a finite subset of $P$ and $L_{n}\subset L_{n+1}$. At the same time, $\langle L_{n+1}\rangle \oplus U_{n+1}=G$. Moreover, $L_{n+1}\setminus L_{n}\subset K_{n+1}\cup \{u_{n+1}\}\subset U_{n}$. Therefore, we complete the construction.

Finally, set $L=\bigcup \{L_{n}:n\in \omega\}$. It follows from (2) that $L\setminus \overline{U_{k}}\subset L_{k}$ is a finite set for each $k\in \omega$. Then, $\bigcap \{\overline{U_{n}}:n\in \omega\}$ implies that $L$ is a closed discrete subset of $G$. Moreover, (1) guarantees that $\langle L\rangle =P$.
\end{proof}

Now we can prove one of main results in this section.

\begin{theorem}\label{t11}
Suppose that $(G,\tau ,\oplus)$ is a non-pseudocompact strongly topological gyrogroup with a symmetric open neighborhood base $\mathscr U$ at $0$. If $G$ is separable, then $G\in\mathcal{S}_{c}$.
\end{theorem}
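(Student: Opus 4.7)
My plan is to split the proof into two cases based on whether $G$ is left precompact.

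In the left precompact case, separability yields a countable dense $D\subset G$, and then $P=\langle D\rangle$ is a countable dense subgyrogroup of $G$. All the hypotheses of Lemma~\ref{yl3} are satisfied, so it directly produces a closed discrete $L\subset P$ with $\langle L\rangle=P$, giving the desired closed suitable set and hence $G\in\mathcal{S}_{c}$.

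In the non-left-precompact case, $lb(G)\geq\omega_1$, so Lemma~\ref{lll} (with $\tau=\omega$) supplies a symmetric $V\in\mathscr U$ and a sequence $\{p_n\}_{n<\omega}$ such that the family $\{p_n\oplus V\}$ is discrete in $G$; in particular $\{p_n\}$ is closed discrete by Lemma~\ref{yl2}. I would fix $U_0\in\mathscr U$ symmetric for which $F\oplus U_0\neq G$ for every finite $F\subset G$, together with $V,W\in\mathscr U$ satisfying $V\oplus V\subset W$ and $W\oplus W\subset U_0$, and then adapt the inductive construction of Lemma~\ref{yl3}. The key change is to replace the step $G=\langle L_k\rangle\oplus U_k$, which rested on left precompactness via Lemma~\ref{yl4}, by a step in which, at each stage $k$, given a finite $U_0$-disjoint $L_k$ inside a countable dense subgyrogroup $P=\{x_n\}$, I adjoin finitely many new points chosen from $P\cap(G\setminus(L_k\oplus U_0))$ --- a nonempty open set by non-left-precompactness and density of $P$. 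Such new points preserve $U_0$-disjointness automatically, so Lemma~\ref{yl2} yields the closed discreteness of $L=\bigcup_k L_k$; they are also used, via a careful appeal to Lemma~\ref{yl1}, to guarantee $x_k\in\langle L_{k+1}\rangle$.

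The main obstacle lies in this second case: reconciling sparseness (needed for closed discreteness) with richness (needed for density of $\langle L\rangle$). When $x_k$ happens to lie inside $L_k\oplus U_0$, it cannot be appended to $L$ without breaking $U_0$-disjointness, and therefore must instead be realized as a gyrogroup word in points drawn from the complementary open region. Ensuring the existence of such representations --- that is, verifying that the subgyrogroup generated by the dense portion of $P$ inside $G\setminus(L_k\oplus U_0)$ is large enough to reach every $x_k$ --- is the delicate heart of the argument; it rests on the fact that every open subgyrogroup of $G$ is clopen (\cite[Proposition~7]{AW}), on Lemma~\ref{yl1}, and on careful bookkeeping that tracks the disjointness and generation invariants stage by stage.
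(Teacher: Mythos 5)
Your left precompact case is fine and in fact more direct than the paper's: separability gives a countable dense subset $D$, $P=\langle D\rangle$ is a countable dense subgyrogroup, and Lemma~\ref{yl3} applies verbatim to produce a closed suitable set. (The paper's Case~2 builds an auxiliary closed subgyrogroup $G_2$ and a set $T$ before invoking Lemma~\ref{yl3}, but your shortcut is legitimate.)

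The genuine gap is in the non-left-precompact case, and it sits exactly where you say the ``delicate heart'' is: you never actually produce the representations of $x_k$ as words in points of $P\cap\bigl(G\setminus(L_k\oplus U_0)\bigr)$, and the tools you cite do not supply them. Lemma~\ref{yl1} only says that deleting the closure of a \emph{small neighborhood of a single point} from a generating open set preserves generation; it says nothing about the subgyrogroup generated by the complement of $L_k\oplus U_0$, which could a priori be a proper clopen subgyrogroup missing $x_k$. Moreover, even granting such a representation, the finitely many new points $y_1,\dots,y_m$ it introduces must be $U_0$-disjoint \emph{from each other}, not just from $L_k$; nothing in your scheme arranges this, and two of them could easily lie in a common translate of $U_0$. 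This tension between $U_0$-disjointness and generation is precisely what the paper sidesteps by a different device: it takes the discrete family $\{a_n\oplus V\}$ from Lemma~\ref{lll}, a countable dense $B_V=\{b_n\}\subset V$, and sets $S=A\cup\{a_n\oplus b_n: n\in\omega\}$. This $S$ is \emph{not} $U_0$-disjoint --- it deliberately places the two points $a_n$ and $a_n\oplus b_n$ close together inside $a_n\oplus V$ --- yet it is closed discrete because each member of a discrete family contains only two points of $S$, and left cancellation recovers $b_n=(\ominus a_n)\oplus(a_n\oplus b_n)\in\langle S\rangle$, so $\langle S\rangle$ is dense in the open subgyrogroup $G_1=\langle V\cup A\rangle$; Lemma~\ref{llll} (using separability) then lifts the suitable set from $G_1$ to $G$. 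Without either carrying out your generation-plus-disjointness bookkeeping in full or switching to some such pairing trick, your Case~1 argument is incomplete.
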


\begin{proof}
It follows from \cite{BL1} that $G$ is Tychonoff. We divide the proof into two cases:

\smallskip
{\bf Case 1:} $G$ is not left precompact.

\smallskip
Then $G$ has a neighborhood $U$ of $0$ such that $F\oplus U\not =G$ for any finite $F\subset G$. By Lemma \ref{yl2}, we need to take $V, W\in \mathscr U$ such that $V\oplus V\subset W$ and $W\oplus W\subset U$, then there exists a subset $A=\{a_{n}:n\in \omega\}\subset G$ with $a_{i}\not =a_{j}$ if $i\not =j$ and the family $\gamma =\{a_{n}\oplus V:n\in \omega\}$ is discrete in $G$. Let $B$ a countable dense subset of $G$, and set $B_{V}=B\cap V=\{b_{n}: n\in\omega\}$. We prove that $S=A\cup (\bigcup _{n\in\mathbb{N}}a_{n}\oplus b_{n})$ is a suitable set of open subgyrogroup $G_{1}=\langle V\cup A\rangle$.

Since $\langle B_{V}\rangle$ is dense in $\langle V\rangle$ and $B_{V}\subset \langle S\rangle$, we have that $\langle S\rangle$ is dense in $G_{1}$. For every $g\in G$, there exists a neighborhood $O$ of $0$ in $G$ such that $(g\oplus O)\cap S\subset\{a_{n}, a_{n}\oplus b_{n}\}$. Therefore, $S$ is closed and discrete and hence it is a suitable set of $G_{1}$. Then since $G_{1}$ is an open subgyrogroup of $G$, it follows from Lemma~\ref{llll} that $G$ has a closed suitable set.

\smallskip
{\bf Case 2:} $G$ is left precompact.

\smallskip
Since $G$ is non-pseudocompact, we can choose a discrete family $\gamma =\{U_{n}:n\in \omega \}$ of non-empty open subsets of $G$. Let $B=\{d_{n}: n\in\mathbb{N}\}$ be a countable dense subset of $G$.

Since $G$ is precompact, for every $n\in \omega$, there exists a finite subset $A_{n}=\{a(n, i):1\leq i\leq m_{n}\}$ of $G$ such that $A_{n}\oplus U_{n}=G$. Fix an $n\in \omega$ and define $H_{n}^{i}=\{d_{n}\}\cap (a(n, i)\oplus U_{n})$ for each $i\leq m_{n}$. Then $H_{n}=\bigcup \{H_{n}^{i}:1\leq i\leq m_{n}\}$.

The set $T_{n}=\bigcup \{(\ominus a(n, i))\oplus H_{n}^{i}:1\leq i\leq m_{n}\}$ is closed and discrete in $G$ and lies in $U_{n}$. Since the family $\gamma$ is discrete, the set $T=\bigcup \{T_{n}: n\in \omega\}$ is closed and discrete in $G$. Let $A=\bigcup \{A_{n}: n\in \omega\}$. For every $n\in \omega$, choose a point $y_{n}\in U_{n}$ such that $d_{n}\in A_{n}\oplus y_{n}$ and denote by $G_{2}$ the closure of $P=\langle A\cup \{y_{n}: n\in \omega\}\rangle$ in $G$. The gyrogroup $G_{2}$ is closed and left precompact by Lemma~\ref{yl5}. Moreover, for each $n\in \omega$, $G_{2} \cap U_{n}\neq\emptyset$, so $G_{2}$ is not pseudocompact.

It follows from Lemma \ref{yl3} that there is a closed discrete subset $L$ of $G_{2}$ such that $\langle L\rangle =P$. We find that $T\cup L$ is closed and discrete in $G$ and $P\subset \langle T\cup L\rangle \supset \langle T\cup A \rangle\supset B$. Hence $\langle T\cup L\rangle$ is dense in $G$. Therefore, $T\cup L$ is a closed suitable set for $G$.
\end{proof}

\begin{lemma}\label{tt17}
Let $G$ be a compact metrizable strongly topological gyrogroup. Then $G$ has a closed suitable set.
\end{lemma}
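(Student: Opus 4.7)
The plan is to adapt the inductive construction of Lemma~\ref{yl3} to the compact setting, the one genuinely new point being that we now allow $0$ to appear as an accumulation point of the suitable set. Since $G$ is compact metrizable, it is separable; fix a countable dense subset $D$ of $G$ and let $P = \langle D\rangle$ be the subgyrogroup it generates. Since $P$ is a countable union, over $n\in\mathbb N$ and $m\in\mathbb N$, of sets of the form $f_{m}(\varepsilon_{1}d_{1},\dots,\varepsilon_{n}d_{n})$ with $d_{i}\in D$ and $\varepsilon_{i}\in\{\pm 1\}$, it is countable; and it is obviously dense in $G$. Using first countability at $0$, fix a decreasing sequence of symmetric open neighborhoods $U_{n}\in\mathscr U$ of $0$ with $\overline{U_{n+1}}\subset U_{n}$ and $\bigcap_{n}U_{n}=\{0\}$. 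Since $G$ is compact, hence pseudocompact, Theorem~\ref{ttt} gives that $G$ is left precompact.

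Enumerate $P=\{x_{n}:n\in\omega\}$. I will construct an increasing sequence $L_{0}\subset L_{1}\subset\cdots$ of finite subsets of $P$ satisfying, for every $k\in\omega$: (i) $x_{k}\in\langle L_{k}\rangle$; (ii) $L_{k+1}\setminus L_{k}\subset U_{k}$; and (iii) $G=\langle L_{k}\rangle\oplus U_{k}$. The base case uses left precompactness together with Lemma~\ref{yl4} applied to $P$ to produce a finite $K_{0}\subset P$ with $K_{0}\oplus U_{0}=G$, and then writes $x_{0}=a_{0}\oplus u_{0}$ with $u_{0}=(\ominus a_{0})\oplus x_{0}\in P$, setting $L_{0}=K_{0}\cup\{u_{0}\}$.

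The inductive step is the one already carried out in Lemma~\ref{yl3}: apply Lemma~\ref{yl4} to the dense subgyrogroup $\langle U_{n}\cap P\rangle$ of the open subgyrogroup $G_{n}=\langle U_{n}\rangle$, which is left precompact by Lemma~\ref{yl5}, to find a finite set $K_{n+1}\subset U_{n}\cap P$ with $G_{n}=\langle K_{n+1}\rangle\oplus U_{n+1}$; then combine with (iii) at stage $n$, using the strongly topological hypothesis to absorb the relevant gyration, to obtain $G=\langle L_{n}\cup K_{n+1}\rangle\oplus U_{n+1}$. This lets us write $x_{n+1}=a_{n+1}\oplus u_{n+1}$ with $a_{n+1}\in\langle L_{n}\cup K_{n+1}\rangle\subset P$ and $u_{n+1}=(\ominus a_{n+1})\oplus x_{n+1}\in U_{n+1}\cap P$, and we take $L_{n+1}=L_{n}\cup K_{n+1}\cup\{u_{n+1}\}$.

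Finally, set $L=\bigcup_{k\in\omega}L_{k}\subset P$. Condition (ii) gives $L\setminus\overline{U_{k}}\subset L_{k}$, which is finite; since $\bigcap_{k}\overline{U_{k}}=\{0\}$, the only possible accumulation point of $L$ in $G$ is $0$, so $L$ is discrete in itself and $L\cup\{0\}$ is closed in $G$. Condition (i) forces $P\subset\langle L\rangle$, so $\langle L\rangle$ is dense in $G$. Thus $L$ is a closed suitable set for $G$. The only real departure from Lemma~\ref{yl3} is that here $\bigcap_{n}U_{n}=\{0\}$ rather than $\emptyset$, which is precisely what lets $0$ appear as an accumulation point of $L$ without damaging the closedness of $L\cup\{0\}$; consequently no genuinely new obstacle arises, and the main issue is just keeping the bookkeeping of the three inductive conditions straight.
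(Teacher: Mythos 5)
Your argument is essentially the paper's own proof, which likewise just reruns the construction of Lemma~\ref{yl3} with $\bigcap_{n}U_{n}=\{0\}$ in place of $\bigcap_{n}U_{n}=\emptyset$, and all the inductive bookkeeping matches. One caveat: your final sentence (like the statement of the lemma itself) overclaims --- what you have actually shown is that $L$ is discrete in itself and $L\cup\{0\}$ is closed, i.e.\ that $L$ is a \emph{suitable} set; $L$ itself need not be closed, and indeed cannot be whenever $0\notin L$ is its accumulation point, since a closed discrete subset of a compact space is finite and a compact gyrogroup such as $\{0,1\}^{\omega}$ is not topologically finitely generated. (You should also discard $0$ from $L$ if it occurs there, so that discreteness at $0$ is not an issue; this does not change $\langle L\rangle$.)
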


\begin{proof}
Since $G$ is compact metrizable, it is separable, hence there exists a countable dense subgroup $P$. Let $P=\{x_{n}: n\in\omega\}$ be a enumeration of $P$. Moreover, we can choose a decreasing sequence $\{U_{n}: n\in\omega\}$ of open neighborhoods of the identity $0$ in $G$
satisfying the following conditions:

\smallskip
(1) $U_{n+1}\oplus U_{n+1}\subset U_{n}$ for each $n\in\omega$;

\smallskip
(2) $\bigcap_{n\in\omega}U_{n}=\{0\}$.

\smallskip
By the same construction of Lemma~\ref{yl3}, we can find an increasing sequence $\{L_{k}:k\in \omega\}$ of finite subsets of $P$ by induction which satisfies the following conditions:

\smallskip
(1) $x_{k}\in \langle L_{k}\rangle$;

\smallskip
(2) $L_{k+1}\setminus L_{k}\subset U_{k}$;

\smallskip
(3) $G=\langle L_{k}\rangle \oplus U_{k}$.

\smallskip
By a similar proof of Lemma~\ref{yl3}, we can find a closed discrete subset $L$ for $P$. Then $L$ is a closed suitable set for $G$ since $P$ is dense in $G$.
\end{proof}

A space $X$ is {\it paracompact}, if each its open cover has a locally finite open refinement.
A space $X$ is {\it submetrizable}, if there exists a continuous injective map of $X$ to
a metrizable space.

\begin{corollary}\label{y16}
 Suppose that $(G,\tau ,\oplus)$ is a separable left precompact Hausdorff strongly topological gyrogroup of countable pseudocharacter with a symmetric open neighborhood base $\mathscr U$ at $0$. If $P$ is a countable dense subgyrogroup of $G$, then there exists a discrete subset $L$ of $P$ such that $L$ is closed in $G\setminus \{0\}$ and $P=\langle L\rangle$. So $L$ is a suitable set for both $P$ and $G$.
\end{corollary}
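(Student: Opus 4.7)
The plan is to mimic the construction of Lemma~\ref{yl3}, replacing the role of non-pseudocompactness (which was used to furnish a nested sequence $\{U_n\}$ of open neighborhoods of $0$ with $\bigcap_n U_n=\emptyset$) by countable pseudocharacter (which furnishes a nested sequence with $\bigcap_n U_n=\{0\}$). Left precompactness of $G$ and density of $P$ will then drive the same inductive construction, and the only adjustment needed is in the final topological verification, where closedness of $L$ in $G$ weakens to closedness in $G\setminus\{0\}$.

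First I would dispose of the trivial case $|P|<\omega$ (take $L=P\setminus\{0\}$) and otherwise enumerate $P=\{x_n:n\in\omega\}$. Since $(G,\tau,\oplus)$ is a Hausdorff topological gyrogroup of countable pseudocharacter, the singleton $\{0\}$ is a $G_\delta$-set; using the symmetric base $\mathscr U$ at $0$ together with regularity of $G$ and the identity $\gyr[x,y](U)=U$ for $U\in\mathscr U$, I would inductively pick $\{U_n:n\in\omega\}\subset\mathscr U$ such that $U_{n+1}\oplus U_{n+1}\subset U_n$, $\overline{U_{n+1}}\subset U_n$, and $\bigcap_{n\in\omega}U_n=\{0\}$, so that also $\bigcap_{n\in\omega}\overline{U_n}=\{0\}$.

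Next, following verbatim the induction in the proof of Lemma~\ref{yl3}, I would construct an increasing sequence $\{L_k:k\in\omega\}$ of finite subsets of $P$ such that $x_k\in\langle L_k\rangle$, $L_{k+1}\setminus L_k\subset U_k$, and $G=\langle L_k\rangle\oplus U_k$. The inductive step relies only on the facts that $\langle U_n\cap P\rangle$ is dense in $\langle U_n\rangle$ (since $P$ is dense in $G$ and $U_n$ is open, cf.~\cite[Lemma 9]{AW}), and that $\langle U_n\rangle$ is left precompact, which follows from Lemma~\ref{yl5} and Lemma~\ref{yl4}. Nothing in that step used the emptiness of $\bigcap U_n$, so it transplants without change.

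Set $L=\bigcup_{k\in\omega}L_k\setminus\{0\}$; then $\langle L\rangle=\langle\bigcup_k L_k\rangle=P$ by condition $(1)$. The condition $L_{k+1}\setminus L_k\subset U_k$ gives, as in Lemma~\ref{yl3}, that $L\setminus\overline{U_k}\subset L_k$ is finite for every $k$. For any $y\in G\setminus\{0\}$ choose $k$ with $y\notin\overline{U_k}$; then $G\setminus\overline{U_k}$ is an open neighborhood of $y$ meeting $L$ in a finite set. By Hausdorffness, either $y\notin L$ and one can shrink to miss that finite set entirely, or $y\in L$ and one can shrink to isolate $y$. Thus $L$ is discrete in itself and closed in $G\setminus\{0\}$, which is exactly to say that $L\cup\{0\}$ is closed in $G$. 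Combined with $\overline{\langle L\rangle}=\overline{P}=G$, this makes $L$ a suitable set for $G$; and since $L\subset P$ with $\langle L\rangle=P$ and the induced topology on $P$ makes $L$ discrete and $L\cup\{0\}$ closed in $P$, $L$ is suitable for $P$ as well.

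The only real subtlety, and the point where the argument diverges from Lemma~\ref{yl3}, is handling $0$: under countable pseudocharacter $0$ may be an accumulation point of $L$, so we must settle for $L$ being closed in $G\setminus\{0\}$ rather than in $G$. This is precisely why the statement is formulated that way, and is enough for suitability because the definition only demands closedness of $L\cup\{0\}$.
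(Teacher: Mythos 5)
Your proof is correct, but it takes a genuinely different route from the paper's. The paper argues by cases: if $G$ is not pseudocompact it simply invokes Lemma~\ref{yl3}; if $G$ is pseudocompact it appeals to the external results of \cite{BL,BL1} that a strongly topological gyrogroup of countable pseudocharacter is paracompact and submetrizable, concludes that $G$ is then compact and metrizable, and finishes with Lemma~\ref{tt17}. You instead give a single unified argument: you observe that the inductive construction inside Lemma~\ref{yl3} uses non-pseudocompactness only to produce a nested sequence $\{U_n\}\subset\mathscr U$ with $\bigcap_n U_n=\emptyset$, and that countable pseudocharacter supplies the same sequence with $\bigcap_n U_n=\{0\}$, at the mere cost of weakening ``$L$ closed in $G$'' to ``$L$ closed in $G\setminus\{0\}$'' --- which is all the statement asks for. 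This is in fact the same idea the paper itself uses in Lemma~\ref{tt17}, but you apply it directly in the general separable left precompact setting rather than only in the compact metrizable one. Your approach buys two things: it avoids the heavy machinery of paracompactness/submetrizability and the pseudocompact--paracompact--compact chain, and it delivers the conclusion exactly as stated, namely $L\subset P$ with $\langle L\rangle=P$ for the \emph{given} countable dense subgyrogroup $P$ (the paper's pseudocompact case, routed through Lemma~\ref{tt17}, produces a suitable set attached to a possibly different countable dense subgyrogroup, so your version is actually a cleaner fit to the statement). The verification at the end --- that $L\setminus\overline{U_k}\subset L_k$ is finite, hence every $y\neq 0$ has a neighborhood meeting $L$ in a finite set, hence $L$ is discrete and $L\cup\{0\}$ is closed --- is sound.
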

\begin{proof}
If $G$ is not pseudocompact, then it follows from Lemma~\ref{yl3} that the conclusion holds. Assume that $G$ is pseudocompact, then from \cite{BL, BL1} that each strongly topological gyrogroup of countable pseudocharacter is paracompact
and submetrizable, hence it is compact and metrizable, thus $G$ has a closed suitable set by Lemma~\ref{tt17}.
\end{proof}

A space $X$ is a {\it $\sigma$-space} if it has a $\sigma$-locally finite network.

\begin{corollary}\label{c0}
 Suppose that $(G,\tau ,\oplus)$ is a strongly topological gyrogroup. If $G$ is a separable $\sigma$-space then $G$ has a suitable set.
\end{corollary}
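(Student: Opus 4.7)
The plan is to reduce Corollary \ref{c0} to a dichotomy on pseudocompactness, applying Theorem \ref{t11} in one branch and Lemma \ref{tt17} in the other. The glue is to show that separability together with the $\sigma$-space assumption force $G$ to have countable pseudocharacter.

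First I would verify that every Hausdorff $\sigma$-space has countable pseudocharacter. Fix a $\sigma$-locally finite network $\mathcal N=\bigcup_{n\in\omega}\mathcal N_n$ and a point $g\in G$. For each $n$, local finiteness of $\mathcal N_n$ makes $F_n:=\bigcup\{N\in\mathcal N_n:g\notin N\}$ closed, so $W_n:=G\setminus F_n$ is an open neighborhood of $g$. If $h\ne g$, then by the network property there is some $N\in\mathcal N$ with $h\in N\subset G\setminus\{g\}$; this $N$ lies in some $\mathcal N_n$, so $h\in F_n$, i.e.\ $h\notin W_n$. Hence $\{g\}=\bigcap_{n\in\omega}W_n$ and $\psi(G)=\omega$.

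Next I would split on pseudocompactness. If $G$ is not pseudocompact, then since $G$ is a separable strongly topological gyrogroup, Theorem \ref{t11} directly yields $G\in\mathcal S_c$ and in particular a suitable set. If $G$ is pseudocompact, I would combine $\psi(G)=\omega$ with the results from \cite{BL,BL1} recalled in Corollary \ref{y16}: every $T_0$ strongly topological gyrogroup of countable pseudocharacter is both paracompact and submetrizable. A pseudocompact paracompact Hausdorff space is compact, and a continuous injection of a compact Hausdorff space into a metrizable space is automatically a homeomorphism onto its (compact metrizable) image, so $G$ itself is compact metrizable. Lemma \ref{tt17} then furnishes a closed suitable set.

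Nothing here is intrinsically hard: the only substantive step is the pseudocharacter calculation, which is a routine exercise in local finiteness, and the remainder is a case split plugging into results already established earlier in the paper. The main point to be careful with is ensuring, in the pseudocompact branch, that the combination \emph{paracompact + pseudocompact $\Rightarrow$ compact} and \emph{compact + submetrizable $\Rightarrow$ metrizable} is applied correctly before invoking Lemma \ref{tt17}.
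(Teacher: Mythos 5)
Your proposal follows the paper's proof essentially verbatim: deduce countable pseudocharacter from the $\sigma$-space hypothesis, then split on pseudocompactness, using Theorem~\ref{t11} in the non-pseudocompact case and, via paracompactness plus submetrizability from \cite{BL,BL1}, reducing the pseudocompact case to the compact metrizable situation of Lemma~\ref{tt17} (the paper routes this last step through Corollary~\ref{y16}, which amounts to the same thing). The one point worth tightening is your pseudocharacter computation: the set $F_n$ is closed only if the members of $\mathcal{N}_n$ are closed, which you should arrange first by replacing each $N$ with $\overline{N}$ --- the family stays locally finite, and by regularity of $G$ it remains a network.
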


\begin{proof}
Since each $\sigma$-space has a countable pseudocharacter, $G$ has countable pseudocharacter. If $G$ is not
pseudocompact, the conclusion holds from Theorem~\ref{t11}. From \cite{BL, BL1}, each strongly topological gyrogroup of countable pseudocharacter is paracompact and submetrizable, hence it is compact and metrizable, thus separable precompact,  so we can apply Lemma~\ref{y16} to conclude that $G$ has a suitable set.
\end{proof}

\begin{corollary}\label{c1}
 Suppose that $(G,\tau ,\oplus)$ is a strongly topological gyrogroup. If $G$ is a separable metrizable space, then $G$ has a suitable set.
\end{corollary}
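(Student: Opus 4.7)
The plan is to observe that Corollary~\ref{c1} is just a specialization of Corollary~\ref{c0} with no new work required, and to say exactly which classical fact makes the hypotheses match. Recall that by the Nagata--Smirnov metrization theorem every metrizable space admits a $\sigma$-locally finite base; any base is in particular a network, so every metrizable space is a $\sigma$-space. Thus a separable metrizable strongly topological gyrogroup $G$ is automatically a separable $\sigma$-space, and Corollary~\ref{c0} applies directly to produce a suitable set for $G$.

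For a slightly more self-contained write-up that does not funnel through Corollary~\ref{c0}, one can instead dichotomize on pseudocompactness. First, I would observe that a separable metrizable space has countable pseudocharacter. If $G$ is not pseudocompact, then Theorem~\ref{t11} applies under the separability hypothesis alone and yields $G\in\mathcal S_c$, hence a suitable set. If $G$ is pseudocompact, then the classical equivalences in metrizable spaces (pseudocompact $\Leftrightarrow$ countably compact $\Leftrightarrow$ compact, via first countability) force $G$ to be compact metrizable, and Lemma~\ref{tt17} then supplies a closed suitable set. Either way, $G$ has a suitable set.

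There is essentially no obstacle in this corollary: all the substantive work has already been done in Theorem~\ref{t11} (non-pseudocompact case) and Lemma~\ref{tt17} (compact metrizable case), or packaged together in Corollary~\ref{c0}. The only thing to verify is the topological folklore that a metrizable space is a $\sigma$-space, or equivalently that pseudocompact plus metrizable implies compact, both of which are standard and found in \cite{E}.
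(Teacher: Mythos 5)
Your proposal is correct and matches the paper's intent: Corollary~\ref{c1} is stated immediately after Corollary~\ref{c0} precisely because every metrizable space has a $\sigma$-locally finite base and hence is a $\sigma$-space, so the separable metrizable case follows at once. Your alternative dichotomy on pseudocompactness simply unrolls the proof of Corollary~\ref{c0} (Theorem~\ref{t11} in the non-pseudocompact case, Lemma~\ref{tt17} in the compact metrizable case), so it is the same argument in substance.
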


We now close our paper with the following three questions.

\begin{question}
 Suppose that $(G,\tau ,\oplus)$ is a metrizable strongly topological gyrogroup, does $G$ have a suitable set?
\end{question}

\begin{question}
Does each locally compact (strongly) topological gyrogroup have a suitable set? What if the space is compact?
\end{question}

\begin{question}\label{dl5.7}
Suppose that $(G,\tau ,\oplus)$ is a Hausdorff strongly topological gyrogroup with a symmetric open neighborhood base $\mathscr U$ at $0$ which satisfies $d(G)< b(G)$, does $G$ have a closed suitable set?
\end{question}

{\bf Acknowledgments.} The authors thank to Alex Ravsky for valuable remarks and corrections and all other sort of help related to the content of this paper.

\end{document}